\numberwithin{figure}{section}
\newcommand{\bbE}{{\mathbb{E}}}
\newcommand{\bbR}{{\mathbb{R}}}
\newcommand{\calM}{{\mathcal{M}}}
\newcommand{\smsec}{\sum_{i,j=1}^d}
\newcommand{\hLip}{\textbf{(Lip)}{ }}
\newcommand{\hkreg}{\textbf{(Ker-Reg)}{ }}
\newcommand{\hvdiffs}{\textbf{(v-diff-Reg+)}}
\newcommand{\defeq}{\vcentcolon=}
\newcommand{\smfir}{\sum_{j=1}^d}
\newcommand{\smfi}{\sum_{i=1}^r}
  \providecommand*{\toclevel@author}{999}
  \providecommand*{\toclevel@title}{0}
\begin{document}

\title*{Iterative Multilevel density estimation for McKean-Vlasov SDEs via projections}
\titlerunning{Multilevel Path Simulation for McKean-Vlasov SDEs}
\author{Denis Belomestny \and Lukasz Szpruch \and Shuren Tan}
\institute{
Denis Belomestny
\at Duisburg-Essen University, Duisburg, Germany
\\ \email{denis.belomestny@uni-due.de}
\and
Lukasz Szpruch 
\at University of Edinburgh, School of Mathematics, Edinburgh, EH9 3JZ, UK\\ 
The Alan Turing Institute, London, UK   \\
\email{l.szpruch@ed.ac.uk}
\and Shuren Tan   
\at University of Edinburgh\\ 
         School of Mathematics \\
         Edinburgh, EH9 3JZ, UK,\\ 
         \email{shuren.tan@ed.ac.uk}      
         }
\maketitle

\abstract{In this paper, we present a generic methodology for the efficient numerical approximation of the density function of the McKean-Vlasov SDEs. The weak error analysis for the projected process motivates us to combine the iterative Multilevel Monte Carlo method for McKean-Vlasov SDEs \cite{szpruch2019} with non-interacting kernels and projection estimation of particle densities \cite{belomestny2018projected}. By exploiting smoothness of the coefficients for McKean-Vlasov SDEs, in the best case scenario (i.e $C^{\infty}$ for the coefficients), we obtain the complexity of order $O(\epsilon^{-2}|\log\epsilon|^4)$ for the approximation of expectations and $O(\epsilon^{-2}|\log\epsilon|^5)$ for density estimation.}

\section{Introduction}
Nonlinear Markov processes are stochastic processes whose transition functions
 depends on the state and 
the distribution of the process. They were introduced by 
McKean~\cite{mckean1966class} to give probabilistic interpretation to kinetic PDEs such as Boltzman equations. These powerful framework has been 
studied by a number of authors; we mention here the books of
Kolokoltsov~\cite{kolokoltsov2010nonlinear} and
Sznitman~\cite{sznitman1991topics}. These processes arise naturally in the
study of the limit behaviour of a large number of weakly interacting particle systems and have a wide range of applications, including financial
mathematics, population dynamics, and neuroscience (see, e.g.,
\cite{frank2004stochastic} and the references therein). 
\par
Fix $T>0$ and let $\{W_t\}_{t\in [0,T]}$ be an $r$-dimensional Brownian motion on a filtered probability space $(\Omega, \{ \mathcal{F}_t\}_{t}, \mathcal{F}, \mathbb{P})$. The McKean-Vlasov SDEs with interacting kernels is governed by
\begin{equation} \label{eq:mlmceulerlineartargetinteract}
\begin{cases}
	dX_t =  \int_{\bbR^d}\bar{b}(X_t, y)\mu_t(dy)dt + \int_{\bbR^d}\bar{\sigma}(X_t,  y)\mu_t(dy)dW_t,\\
	 \mu_{t}=\text{Law}(X_t), \quad X_0\in \bbR^d.
\end{cases}
\end{equation} 
Notice that $\{X_t\}_{t\in [0,T]}$ is not a Markov process but $(X_t, \mu_t)_{t\in[0,T]}$ is. It can be shown (under suitable smoothness conditions) that solution to \eqref{eq:mlmceulerlineartargetinteract} has a density function $\mu_{t}(x)$ that satisfies a
nonlinear Fokker-Planck equation of the form
\begin{align*}
\partial_t\mu_{t}(x)  &  =-\sum_{i=1}^d{\partial_{x_i}}\left(
\left(  \int \bar{b}_i(x,y)\mu_{t}(y)\,dy\right)  \,\mu_{t}(x)\right) \nonumber\\
&  +\sum_{i,j=1}^d\frac{1}{2}{\partial^2_{x_i x_j}}
\left( \left[  \left(  \int
\bar{\sigma}(x,y)\mu_{t}(y)\,dy\right)\left(   \int
\bar{\sigma}(x,y)\mu_{t}(y)\,dy\right)^T\right]_{i,j}  \,\mu_{t}(x)  \right), \label{eq:FP}
\end{align*}
which can be seen as an analogue of the linear Fokker-Planck equation in the
SDE case. The theory of the propagation of chaos developed in \cite{sznitman1991topics},
states that \eqref{eq:mlmceulerlineartargetinteract} is a limiting equation of the system of stochastic
interacting particles (samples) with the following dynamics
\begin{equation*}
\label{eq:par} 
\left\{
  \begin{array}{l l}
d Y^{i,N}_t &=   \int_{\bbR^d}\bar{b}(Y^{i,N}_t,y)\mu^{Y,N}_t(dy)  dt +  \int_{\bbR^d}\bar{\sigma}(Y^{i,N}_t,y)\mu^{Y,N}_t(dy)  dW^i_t, 
\\
\mu^{Y,N}_t &:= \frac{1}{N} \sum_{i=1}^{N} \delta_{Y_t^{i,N}}, \quad t\in[0,T],
  \end{array} \right.	
\end{equation*}
where $\{Y^{i,N}_0\}_{i=1,\ldots,N}$ are i.i.d samples with law $\mu_0$ and $\{W^i_t\}_{i=1,\ldots,N}$ are independent Brownian motions. Consider a partition $\{t_k\}$ of $[0,T]$, with $t_k-t_{k-1}=h$ and define $\eta{(t)}\defeq t_k\, \text{if}~t\in[t_k,t_{k+1})$. Authors in \cite{antonelli2002rate,bossy1997stochastic} proposed to use the Euler scheme yields
\begin{equation} \label{eq:eulerpartcle}
\bar{Y}_{t_{k+1}}^{i,N}=\bar{Y}_{t_{k}}^{i,N}+\frac{1}{N}\sum_{j=1}^{N}%
\bar{b}(\bar{Y}_{t_{k}}^{i,N},\bar{Y}_{t_{k}}^{j,N})\,h+\frac{1}{N}\sum_{j=1}%
^{N}\bar{\sigma}(\bar{Y}_{t_{k}}^{i,N},\bar{Y}_{t_{k}}^{j,N})\,(W_{t_{k+1}}^{i}-W_{t_k}^{i})
\end{equation}
for $i=1,\ldots,N.$ Note that implementation of the above algorithm requires 
$O(N^{2})$ numerical operations at each timestep. It has been proved that, under suitable regularity conditions,  these particle systems converge with the weak rate of order $((\sqrt{N})^{-1} + h)$, see \cite{antonelli2002rate,bossy2004optimal,bossy2002rate,bossy1997stochastic} for more details. In some cases, using weaker notation of error in number of particles is $1/N$ as shown in \cite{chassagneux2019weak,szpruch2019antithetic}. Propagation of chaos property suggests using single ensemble of particles to approximate both coefficients of
\eqref{eq:mlmceulerlineartargetinteract} and the quantity of interest (either expected value of the functional or the density function itself). This leads to a complicated nonlinear relation between statistical error and bias. 

Our main motivation is to approximate density function $\mu_t(y)$ of \eqref{eq:mlmceulerlineartargetinteract}. We sketch the key idea here and refer readers to Section \ref{sec:desmlmc} for details. Let $(\varphi_{\mathbf{k}},$ $\mathbf{k}\in \mathbb{N}_0^d)$ be a total orthonormal basis in \thinspace$L_{2}(\mathbb{R}^{d},w)$ for some  weight function $w:\mathbb{R}^d\rightarrow\mathbb{R}$. Provided \(\bar{b}(x,\cdot),\bar{\sigma}(x,\cdot)\in L_2(\bbR^d,w)\) for any \(x\in \bbR^d,\) one can show that smooth density $\mu_{t}(y)$ admits the representation below, for $t>0$,
\begin{equation*}\label{eq:mutdef}
\mu_{t}(y)=\sum_{\mathbf{k}\in \mathbb{N}_0^d}\gamma_{\mathbf{k}}(t)\varphi_{\mathbf{k}}(y)w(y)~\text{with}~\gamma_{\mathbf{k}}(t)\defeq\bbE
\left[  \varphi_{\mathbf{k}}(X_{t})\right]=\int_{\mathbb{R}^{d}}\mu_{t}(y)\varphi_{\mathbf{k}}(y)\,dy.  
\end{equation*}
We follow \cite{belomestny2018projected} and fix some natural number $K > 0$. Consider the projection estimator,
\begin{equation}\label{eq:projestimator}
\mu_t^K(y) =  \sum_{|\mathbf{k}|\leq K}\gamma_{\mathbf{k}}(t)\varphi_{\mathbf{k}}(y)w(y)
\end{equation}
Considering the mean-square-error (MSE) for ${\mu}_{t}^{K}(y)$, one can show
\begin{align}\label{eq:msegeneral}
MSE^K\defeq\sup_{0< t\leq T}\int|{\mu}_{t}^{K}(y)-\mu_{t}(y)|^{2}w^{-1}(y)\,dy=\sum_{|\mathbf{k}|>K}\sup_{0< t\leq T}|\gamma_{\mathbf{k}}(t)|^{2}.
\end{align}
The MSE is controlled entirely by the truncation error due to the projection. Moreover, \eqref{eq:projestimator} inspires us to consider another projection estimator
\begin{align*}
\mu_t^{K,\tilde{X}^K}(y) =  \sum_{|\mathbf{k}|\leq K}\gamma_{\mathbf{k}}^{\tilde{X}^K}(t)\varphi_{\mathbf{k}}(y)w(y),~ \text{with}~\gamma_{\mathbf{k}}^{\tilde{X}^K}(t) = \bbE[\varphi_{\mathbf{k}}(\tilde{X}_{t}^K)],
\end{align*}
where $\tilde{X}_t^K$ is defined as (see full expression in \eqref{eq:defxtminteract_project_K})
\begin{align} \label{eq:mlmceulerlineartargetinteractprojnew}
d\tilde{X}_t^K = \int_{\bbR^d}\bar{b}(\tilde{X}_t^K, y)\mu_t^{K,\tilde{X}^K}(y)dydt + \int_{\bbR^d}\bar{\sigma}(\tilde{X}_t^K,  y)\mu_t^{K,\tilde{X}^K}(y)dydW_t,~\tilde{X}^K_0\sim \mu_0
\end{align}
By decomposition of MSE, one can show $MSE^{K,\tilde{X}}$ is such that\footnote{For convenience, we use the notation $x\lesssim y$ to denote that there exists a constant $c$  such that $x\leq c\ y$.}
\begin{align}
MSE^{K,\tilde{X}^K} &= \sup_{0< t\leq T}\int|{\mu}_{t}^{K,\tilde{X}^K}(y)-\mu_{t}(y)|^{2}w^{-1}(y)\,dy\nonumber\\
&\lesssim\sup_{0< t\leq T}\int|{\mu}_{t}^{K,\tilde{X}^K}(y)-{\mu}_{t}^{K}(y)|^{2}w^{-1}(y)\,dy+\sup_{0< t\leq T}\int|{\mu}_{t}^{K}(y)-{\mu}_{t}(y)|^{2}w^{-1}(y)\,dy\nonumber\\
&=\sup_{0< t\leq T}\sum_{|\mathbf{k}|\leq K}|\gamma_{\mathbf{k}}^{\tilde{X}^K}(t)-\gamma_{\mathbf{k}}(t)|^2+\sum_{|\mathbf{k}|>K}\sup_{0< t\leq T}|\gamma_{\mathbf{k}}(t)|^{2}.\label{eq:MSEintrores}
\end{align}
The weak error analysis (see Lemma \ref{lm:wkmain}) tells us 
\begin{align}\label{eq:weakresintro}
	\sup_{0< t\leq T}\sum_{|\mathbf{k}|\leq K}|\gamma_{\mathbf{k}}^{\tilde{X}^K}(t)-\gamma_{\mathbf{k}}(t)|^2\lesssim\sum_{|\mathbf{k}|>K}\sup_{0< t\leq T}|\gamma_{\mathbf{k}}(t)|^{2}
\end{align}
Combing \eqref{eq:MSEintrores} and \eqref{eq:weakresintro} results in 
\begin{align}\label{eq:MSEXK}
	MSE^{K,\tilde{X}^K}\lesssim\sum_{|\mathbf{k}|>K}\sup_{0< t\leq T}|\gamma_{\mathbf{k}}(t)|^{2}.
\end{align}
Combining \eqref{eq:msegeneral} and \eqref{eq:MSEXK} shows we do not impede the estimation of density $\mu_t(y)$  if considering the dynamics \eqref{eq:mlmceulerlineartargetinteractprojnew} rather than \eqref{eq:mlmceulerlineartargetinteract}. Let us introduce the functions
\begin{align}\label{abd}
\alpha_{\mathbf{k}}(x)   :=\int \bar{b}(x,u)\varphi_{\mathbf{k}}(u)\,w(u)\,du,~
\beta_{\mathbf{k}}(x)   :=\int \bar{\sigma}(x,u)\varphi_{\mathbf{k}}(u)\,w(u)\,du.
\end{align}
With this notation, \eqref{eq:mlmceulerlineartargetinteractprojnew} becomes
 \begin{eqnarray}\label{eq:defxtminteract_project_K}
d\tilde{X}^{K}_t &=&\sum_{|\mathbf{k}|\leq K}\alpha
_{\mathbf{k}}(\tilde{X}^{K}_t)\gamma^{\tilde{X}^K}_{\mathbf{k}}(t)\, dt + \sum_{|\mathbf{k}|\leq K}\beta
_{\mathbf{k}}(\tilde{X}^{K}_t)\gamma^{\tilde{X}^K}_{\mathbf{k}}(t)\, dW_t\nonumber
\\
&=& A_K(t, \tilde{X}^{K}_t)\, dt + B_K(t, \tilde{X}^{K}_t)\, dW_t,\quad \tilde{X}^K_0\sim \mu_0,
\end{eqnarray}
with \(|\mathbf{k}|=\max\{k_1,\ldots,k_d\}\) where  \(\mathbf{k}=(k_1,\ldots,k_d)\).  The equation \eqref{eq:defxtminteract_project_K} is a special case of \eqref{eq:mlmceulerlineartargetinteract}  referred to as McKean-Vlasov SDEs with non-interacting kernels;  the measure dependence is only through $K$ expected values and the corresponding algorithm requires only $O(KN)$ numerical operations plus integrations at each time step. If $K$ is much smaller than $N$ and integration cost is negligible, we obtain a substantial cost reduction. Also, this form allows for iterative MLMC methodology to be employed.
We show that  iterative MLMC via projections would reduce an extra one order of magnitude compared to the results in \cite{belomestny2018projected}. On the one hand, this reduction of an extra one order is due to the iterative MLMC across level of time-discretisation. On the other hand, the antithetic MLMC in \cite{szpruch2019antithetic} renders a massive improvement from balancing cost between the number of ensembles and number of particles in each ensemble since cost of $\eqref{eq:mlmceulerlineartargetinteract}$ is linear in the number of ensembles and quadratic in the number of particles. It would not help for the projected process \eqref{eq:defxtminteract_project_K} because the number of ensembles and the number of particles both are linear in cost, and there is no MLMC in time discretisation.  Finally, we present fully implementable algorithm that can exploit smoothness of the coefficients in \eqref{eq:mlmceulerlineartargetinteract} and in the best case scenario, that is, if all the coefficient functions are infinitely smooth, the complexities reduce to one of order  $O(\varepsilon^{-2} |\log(\epsilon)|^{4})$ for approximation of expectations (see other MLMC approaches in \cite{2016arXiv161009934H} and \cite{2015arXiv150802299R}) and $O(\epsilon^{-2}|\log\epsilon|^5)$ for density estimation. 
\section{Iterative MLMC algorithm with projected coefficients}\label{sec:desmlmc}

\textbf{Projections.} Recall that $(\varphi_{\mathbf{k}},$ $\mathbf{k}\in \mathbb{N}_0^d)$ is a total orthonormal
system in \thinspace$L_{2}(\mathbb{R}^{d},w)$ for some  weight
function \(w(x)>0\), \(x\in \mathbb{R}^d\) and $\alpha_{\mathbf{k}}(x)$ and $\beta_{\mathbf{k}}(x) $ defined in \eqref{abd}, so that
\begin{align*}
\int_{\mathbb{R}^d}\bar{b}(x,y)\mu_{t}(y)dy   =\sum_{\mathbf{k}\in \mathbb{N}_0^d}\alpha
_{\mathbf{k}}(x)\gamma_{\mathbf{k}}(t),~\int_{\mathbb{R}^d}\bar{\sigma}(x,y)\mu_{t}(y)dy  =\sum_{\mathbf{k}\in \mathbb{N}_0^d}\beta
_{\mathbf{k}}(x)\gamma_{\mathbf{k}}(t),
\end{align*}
and \eqref{eq:mlmceulerlineartargetinteract} transforms to
\begin{equation}\label{eq:defxtminteract_project_inf}
dX_t =\sum_{\mathbf{k}\in \mathbb{N}_0^d}\alpha
_{\mathbf{k}}(x)\gamma_{\mathbf{k}}(t)\, dt + \sum_{\mathbf{k}\in \mathbb{N}_0^d}\beta
_{\mathbf{k}}(x)\gamma_{\mathbf{k}}(t)\, dW_t.
\end{equation}
 The idea of the projection estimation consists in replacing the infinite sums in \eqref{eq:defxtminteract_project_inf} by finite sums of  the first at most $d\cdot(K+1)$ terms, i.e. \eqref{eq:defxtminteract_project_K}. If the coefficients \(\gamma_\mathbf{k}\) decay fast as \(|\mathbf{k}|\to \infty,\) one expects that the corresponding approximations \(A_\mathbf{k}\) and \(B_\mathbf{k}\) in \eqref{eq:defxtminteract_project_K} are close to the original coefficients. The  associated particle system for \eqref{eq:defxtminteract_project_K} is given by
 \begin{align}\label{eq:defxtminteract_project_Keuler}
 \bar{X}^{i,N,K}_{t_{s+1}} &=\bar{X}^{i,N,K}_{t_s}+\sum_{|\mathbf{k}|\leq K}\alpha
_{\mathbf{k}}(\bar{X}^{i,N,K}_{t_s})\hat{\gamma}_{\mathbf{k}}(t_s)\, (t_{s+1}-t_s) + \sum_{|\mathbf{k}|\leq K}\beta
_{\mathbf{k}}(\bar{X}^{i,N,K}_{t_s})\hat{\gamma}_{\mathbf{k}}(t_s)\,(W_{t_{s+1}}^{i}-W_{t_s}^{i}),
 \end{align}
 where $\hat{\gamma}_{\mathbf{k}}(t) = \frac{1}{N}\sum_{j=1}^N\varphi_{\mathbf{k}}(\bar{X}^{j,N,K}_t)$.
We work under the following assumptions:
\begin{description}
\item[\textbf{(Ker-Reg)}{ }]  $\bar{b} \in C^{2,s}_{b,b}(\bbR^d\times\bbR^{d}, \bbR^d)$ and $\bar{\sigma}\in C^{2,s}_{b,b}(\bbR^d\times\bbR^{d}, \bbR^{d\otimes r})$, where $s>2$. 
\item[\textbf{($\mu_0$-$L_p$)}{ }]  There exists constant $\rho_{0}>0,$ $\rho_{1}>0$ such that
\[
\mu_{0}(x)\lesssim\exp\left(  -\rho_{0}|x|^{\rho_{1}}\right)  ,\quad
|x|\rightarrow\infty
\]

\item[(AF)] The basis functions $(\varphi_{\mathbf{k}}) \in C^{2}_{b}( \mathbb{R}^d, \mathbb{R}^d)$ fulfil there exists constants $L_{\mathbf{k},\varphi
},D_{\mathbf{k},\varphi}$
 such that
\[
\left\vert \varphi_{\mathbf{k}}(z)-\varphi_{\mathbf{k}}(z^{\prime})\right\vert \leq
L_{\mathbf{k},\varphi}\left\vert z-z^{\prime}\right\vert ,\quad\left\vert \varphi
_{\mathbf{k}}(z)\right\vert \leq D_{\mathbf{k},\varphi},\quad \mathbf{k}\in \mathbb{N}_0^d 
\]
for all $z,z^{\prime}\in\mathbb{R}^{d}$.

\item[(AC)]  There are positive constants \(L_\varphi, D_\varphi, A_{\alpha},A_{\beta},\) \(B_{\alpha},B_{\beta}\) such that 
\begin{align*}
\sup_{x\in\mathbb{R}^{d}}\left\vert \alpha_{\mathbf{k}}(x)\right\vert  &  \leq
A_{\mathbf{k},\alpha}(1+\left\vert x\right\vert ) \text{ \ \ with }\
\text{ \ }\sum_{\mathbf{k}\in \mathbb{N}_0^d}A_{\mathbf{k},\alpha} \leq A_{\alpha}~\text{and}~\sum_{\mathbf{k}\in \mathbb{N}_0^d} L_{\mathbf{k},\varphi}A_{\mathbf{k},\alpha}\leq L_{\varphi}A_{\alpha}, \\
\sup_{x\in\mathbb{R}^{d}}\left\vert \beta_{\mathbf{k}}(x)\right\vert  &  \leq
A_{\mathbf{k},\beta}(1+\left\vert x\right\vert )\text{ \ \ with}\ 
\text{ \ }\sum_{\mathbf{k}\in \mathbb{N}_0^d}A_{\mathbf{k},\beta}   \leq A_{\beta}~\text{and}~\sum_{\mathbf{k}\in \mathbb{N}_0^d}L_{\mathbf{k},\varphi}A_{\mathbf{k},\beta}\leq L_{\varphi}A_{\beta},
\end{align*}
and 
\begin{align*}
\sup_{x,x^{\prime}\in\mathbb{R}^{d}}\frac{|\alpha_{\mathbf{k}}(x)-\alpha_{\mathbf{k}}(x^{\prime
})|}{|x-x^{\prime}|} &  \leq B_{\mathbf{k},\alpha}\text{ \ \ with}\  
\text{ \ }\sum_{\mathbf{k}\in \mathbb{N}_0^d}B_{\mathbf{k},\alpha}  \leq B_{\alpha}~\text{and}~\sum_{\mathbf{k}\in \mathbb{N}_0^d}D_{\mathbf{k},\varphi}B_{\mathbf{k},\alpha}\leq D_{\varphi}B_{\alpha} 
\\
\sup_{x,x^{\prime}\in\mathbb{R}^{d}}\sum_{\mathbf{k}\in \mathbb{N}_0^d}^{\infty}\frac{|\beta
_{\mathbf{k}}(x)-\beta_{\mathbf{k}}(x^{\prime})|}{|x-x^{\prime}|} &  \leq B_{\mathbf{k},\beta}\text{ \ \ with}\ 
\text{ \ } \sum_{\mathbf{k}\in \mathbb{N}_0^d} B_{\mathbf{k},\beta}   \leq B_{\beta}~\text{and}~\sum_{\mathbf{k}\in \mathbb{N}_0^d}D_{\mathbf{k},\varphi}B_{\mathbf{k},\beta}\leq D_{\varphi}B_{\beta}.
\end{align*}
 \item[(AD)] There is a constant $\gamma^\circ$ satisfying 
 \begin{eqnarray*}\label{eq:gammadecayAD}
|\gamma_{\mathbf{k}}(t)|\lesssim \exp(-\gamma^\circ|\mathbf{k}|), \quad |\mathbf{k}|\to \infty,
\end{eqnarray*}

\end{description}
Note that if \hkreg \,  holds, then 
\begin{itemize}
\item[\hLip]\hspace{0.4cm}the kernels $\bar{b}$ and $\bar{\sigma}$ are globally Lipschitz,  i.e. 
for all $x_1,x_2, y_1, y_2\in\mathbb{R}^d$, there exists a constant $L$ such that
$$ | \bar{b}(x_1,y_1) -\bar{b}(x_2,y_2) |  +  \| \bar{\sigma}(x_1,y_1) - \bar{\sigma}(x_2,y_2) \|  \leq L ( |x_1-x_2| + |y_1-y_2|). $$
\end{itemize}
\begin{remark}
All involved Lipschitz constants decay when $|\mathbf{k}|\rightarrow \infty$  (this is crucial for rigorous asymptotic analysis). Note that (AF) and \textbf{(Ker-Reg)}{ }  do not imply (AC). (AD) is satisfied if for example the coefficients $\bar{b}$ and $\bar{\sigma}$ are infinitely smooth with respect to the system $(\varphi_{\mathbf{k}})_{\mathbf{k}\in \mathbb{N}^d_0 }$. Also,  $\gamma^{\circ}$ may grow at most polynomially in $d$ and this would lead to polynomial dependence of complexity on $d$. We implicitly assume $\alpha
_{\mathbf{k}}$ and $\beta_{\mathbf{k}}$ are known in closed form or can be computed cheaply.  We denote by $C^{p}_{b}(\mathbb{R}^m, \mathbb{R}^d)$ the set of functions from $\mathbb{R}^m$ to $\mathbb{R}^d$ that are  continuously $p$ times differentiable such that the partial derivatives are bounded, and  $C^{p,q}_{b,b}(\mathbb{R}^m \times \mathbb{R}^n, \mathbb{R}^d)$ the set of functions from $\mathbb{R}^m \times \mathbb{R}^n $ to $\mathbb{R}^d$ that are continuously $p$ times differentiable in the first argument and continuously $q$ times differentiable in the second argument such that the corresponding partial derivatives are bounded. 
\end{remark}

From \cite{antonelli2002rate} and \cite{sznitman1991topics}, the standard assumptions \textbf{(Ker-Reg)}{ } and \textbf{($\mu_0$-$L_p$)}{ } ensure that there exists a unique strong solution of \eqref{eq:mlmceulerlineartargetinteract} and the integrability of $X_t$ holds, i.e. for any $p\geq 2$, $\sup_{0\leq t\leq T}\bbE|X_t|^p <\infty.$

\par
\textbf{Analysis of projections}\label{sec:Projections}
First, we have fairly general conditions (AF) and (AC) that guarantee that the coefficients of projected MVSDEs \eqref{eq:defxtminteract_project_inf} are globally Lipschitz and of linear growth. Next we check conditions (AF) and (AC) for the case of  normalised Hermite polynomials.

\begin{example}
\label{exm:hermite}
The (normalised) Hermite polynomial of order $j$ is given, for $j\geq0$, by
\[
\overline{H}_{j}(x)=c_j (-1)^{j}e^{x^{2}}\frac{d^{j}}{dx^{j}}(e^{-x^{2}}),\quad c_{j}=\left(  2^{j}j!\sqrt{\pi
}\right) ^{-1/2}.
\]
These polynomials are orthonormal with respect to the weight function
$w(x)=e^{-x^{2}}$ and satisfy: $\int_{{\mathbb{R}}}\overline{H}_{j}(x)\overline{H}_{\ell}(x)e^{-x^{2}%
}dx=\delta_{j,\ell}$.  Set 
\begin{eqnarray}
\label{eq:herm_func}
\varphi_{\mathbf{k}}(u)= \overline{H}_{k_1}(u)e^{-u^{2}/2}\cdot\ldots\cdot \overline{H}_{k_d}(u)e^{-u^{2}/2}
\end{eqnarray}
where \(\mathbf{k}=(k_1,\ldots,k_d)\in \mathbb{N}_0^d.\)
Then the system \((\varphi_{\mathbf{k}})_{\mathbf{k}\in \mathbb{N}^d_0 }\) is a complete orthonormal system in \(L_2(\mathbb{R}^d)\) fulfilling  the assumption (AF) with \(D_{\mathbf{k},\varphi}\) and \(L_{\mathbf{k},\varphi}\) uniformly bounded in \(\mathbf{k}.\) Suppose that \(\mu_t\in L_2(\mathbb{R}^d)\) and \(b(x,\cdot),\sigma(x,\cdot)\in L_2(\mathbb{R}^d).\)
Then the density $\mu_{t}$ can be developed in the Hermite
basis $\mu_{t}(u)=\sum_{\mathbf{j}\in\mathbb{N}^d_0}\gamma_{\mathbf{j}}(t)\varphi_{\mathbf{j}}(u)$ where $\gamma
_{\mathbf{j}}(t)=\int_{{\mathbb{R}^d}}\mu_{t}(x)\varphi_{\mathbf{j}}(x)dx$.
From \cite{belomestny2018projected}, one can show (AC) is satisfied.
\end{example}

\subsection{Weak error analysis} \label{sec:weamain}
We then reveal the order of weak convergence of \eqref{eq:defxtminteract_project_K} to \eqref{eq:mlmceulerlineartargetinteract} in the lemma below. We begin this analysis by defining  $\mathcal{X}^{s,x}$ via
\begin{equation}\label{eq:weprocess}
\mathcal{X}^{s,x}_t = x + \int_s^t \sum_{\mathbf{k}\in \mathbb{N}_0^d}\alpha
_{\mathbf{k}}(\mathcal{X}^{s,x}_u)\gamma_{\mathbf{k}}(u)\,du +\int_s^t \sum_{\mathbf{k}\in \mathbb{N}_0^d}\beta_{\mathbf{k}}(\mathcal{X}^{s,x}_u)\gamma_{\mathbf{k}}(u) \,dW_u,
\end{equation}
where $\alpha_{\mathbf{k}}(\cdot)$ and $\beta_{\mathbf{k}}(\cdot)$ are defined in \eqref{abd}. Notice that \eqref{eq:weprocess}, unlike \eqref{eq:mlmceulerlineartargetinteract}, is a Markov process. For $P \in C^{2}_{b}( \mathbb{R}^d, \mathbb{R}^d)$ and  $t \in [0,T]$, with assumptions listed in Section \ref{sec:desmlmc}, we know from \cite{szpruch2019},
\[
\int_{\mathbb{R}^d}\bbE \big[ P( \mathcal{X}^{0,x}_t) \big] \, \mu_0(dx)  =  \bbE \big[ \bbE[P(X_t) | X_0  ] \big],
\]
which we will use in Theorem \ref{lm:wkmain}. We consider the function

\begin{align}\label{eq:defvinteract}
v(s,x):=\mathbb{E}[ P(\mathcal{X}_t^{s,x})], \quad (s,x)\in[0,t]\times \mathbb{R}^d.
\end{align}
We aim to show that $v(s,x)\in C^{1,2}$. The lemma \ref{lm:regularityBinteract} gives the first step. 
\begin{lemma}\label{lm:regularityBinteract}
Assume  \textbf{($\mu_0$-$L_p$)}{ } \, and \textbf{(Ker-Reg)}{ }.   Then 
\[
	\sum_{\mathbf{k}\in \mathbb{N}_0^d}\alpha
_{\mathbf{k}}(\cdot)\gamma_{\mathbf{k}}(\cdot)\in C^{2,1}_{b,b}(\mathbb{R}^d \times [0,T],\mathbb{R}^d)~\text{and}~\sum_{\mathbf{k}\in \mathbb{N}_0^d}\beta_{\mathbf{k}}(\cdot)\gamma_{\mathbf{k}}(\cdot)\in C^{2,1}_{b,b}(\mathbb{R}^d \times [0,T],\mathbb{R}^{d\otimes r}).
\]
\end{lemma}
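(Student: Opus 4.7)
The plan is to recognise the two series as expectations and thereby reduce the lemma to a regularity statement for the maps $(x,t)\mapsto \mathbb{E}[\bar b(x,X_t)]$ and $(x,t)\mapsto \mathbb{E}[\bar\sigma(x,X_t)]$. Since $\{\varphi_{\mathbf{k}}\}$ is orthonormal in $L_2(\mathbb{R}^d,w)$ and, under \textbf{(Ker-Reg)} combined with the decay properties of $w$, both $\bar b(x,\cdot)$ and $\bar\sigma(x,\cdot)$ lie in $L_2(\mathbb{R}^d,w)$, a Parseval-type identity applied to the pair $\bar b(x,\cdot)$ and $\mu_t/w$ gives
\begin{equation*}
\sum_{\mathbf{k}\in\mathbb{N}_0^d}\alpha_{\mathbf{k}}(x)\gamma_{\mathbf{k}}(t)=\int_{\mathbb{R}^d}\bar b(x,y)\mu_t(y)\,dy=\mathbb{E}[\bar b(x,X_t)],
\end{equation*}
and analogously $\sum_{\mathbf{k}}\beta_{\mathbf{k}}(x)\gamma_{\mathbf{k}}(t)=\mathbb{E}[\bar\sigma(x,X_t)]$. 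It then suffices to show that each of these maps lies in $C^{2,1}_{b,b}$.

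For the $C^{2}$-regularity in $x$, under \textbf{(Ker-Reg)} the partial derivatives $\partial_x^{\alpha}\bar b(\cdot,\cdot)$ with $|\alpha|\leq 2$ are continuous and bounded jointly in $(x,y)$, so dominated convergence permits differentiation under the expectation,
\begin{equation*}
\partial_x^{\alpha}\mathbb{E}[\bar b(x,X_t)]=\mathbb{E}[\partial_x^{\alpha}\bar b(x,X_t)],\qquad |\alpha|\leq 2,
\end{equation*}
which is bounded uniformly in $(x,t)$; joint continuity in $(x,t)$ is inherited from continuity of $\partial_x^{\alpha}\bar b$ together with the $L^{p}$-continuity of $t\mapsto X_t$.

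For the $C^{1}$-regularity in $t$, I would freeze $x$ and apply It\^o's formula to the $C^{2}$ function $y\mapsto\bar b(x,y)$ along the MVSDE solution $X_s$. Abbreviating $B^{\mu_s}(z):=\int\bar b(z,y)\mu_s(dy)$ and $\Sigma^{\mu_s}(z):=\int\bar\sigma(z,y)\mu_s(dy)$, the formula reads
\begin{equation*}
\bar b(x,X_t)-\bar b(x,X_0)=\int_0^t\!\Big[\nabla_y\bar b(x,X_s)\!\cdot\! B^{\mu_s}(X_s)+\tfrac{1}{2}\mathrm{tr}\big(D^2_y\bar b(x,X_s)\,\Sigma^{\mu_s}(\Sigma^{\mu_s})^{\top}(X_s)\big)\Big]ds+M_t,
\end{equation*}
where $M_t$ is a stochastic integral. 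The bounded first and second $y$-derivatives of $\bar b$ (from \textbf{(Ker-Reg)}), the linear growth of $B^{\mu_s}$ and $\Sigma^{\mu_s}$ (via \textbf{(Lip)}), and the moment bound $\sup_{s\leq T}\mathbb{E}|X_s|^p<\infty$ (available under \textbf{($\mu_0$-$L_p$)} and \textbf{(Ker-Reg)}) make $M_t$ a true martingale. Taking expectations, differentiating in $t$, and controlling the integrand by the same bounds yields
\begin{equation*}
\partial_t\mathbb{E}[\bar b(x,X_t)]=\mathbb{E}\Big[\nabla_y\bar b(x,X_t)\!\cdot\! B^{\mu_t}(X_t)+\tfrac{1}{2}\mathrm{tr}\big(D^2_y\bar b(x,X_t)\,\Sigma^{\mu_t}(\Sigma^{\mu_t})^{\top}(X_t)\big)\Big],
\end{equation*}
which is bounded uniformly in $(x,t)$ and continuous by one more dominated convergence argument. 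The same chain of steps with $\bar b$ replaced by $\bar\sigma$ handles the second series.

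I expect the main obstacle to be the first step: rigorously justifying both the uniform $L_2(w)$-membership of the kernels $\bar b(x,\cdot),\bar\sigma(x,\cdot)$ and the pointwise identification of the infinite sum with the expectation, so that the subsequent differentiations act on an honest formula rather than on a formal series. Once this representation is secured, the $x$- and $t$-regularity reduce to a clean It\^o computation against the finite-moment bounds supplied by \textbf{(Ker-Reg)} and \textbf{($\mu_0$-$L_p$)}.
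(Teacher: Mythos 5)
Your argument is correct and matches the route the paper itself takes: its proof is only a pointer to \cite{szpruch2019}, where the regularity of $(x,t)\mapsto\mathbb{E}[\bar b(x,X_t)]$ is established exactly as you propose, by differentiating under the expectation in $x$ and applying It\^o's formula to $y\mapsto\bar b(x,y)$ along $X$ for the $t$-derivative. The one step to keep explicit is the Parseval identification of the series with $\int\bar b(x,y)\mu_t(y)\,dy$, which the paper likewise assumes (it is the displayed identity opening Section~2) rather than proves, so your flagging of it as the delicate point is apt but does not put you at odds with the paper.
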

\begin{proof}
The proof can be carried out along the same lines as in \cite{szpruch2019}.
\end{proof}
Next, the following lemma verifies the property of $v(s,x)$ that is needed:
\begin{lemma} \label{eq:regularityofderivativeofvy}${}$ Assume (AC), \textbf{(Ker-Reg)}{ }, and \textbf{($\mu_0$-$L_p$)}{ }. Then for any $(s,x) \in [0,t] \times \mathbb{R}^d$,  $(i,j)\in\{1,\ldots,d\}^2$, $\partial_{s} v (s,x)$ is continuous and 
\[ \tag{\hvdiffs}
       \|\partial_{x_i} v (s,x) \|_{\infty}   + \|\partial^2_{x_i, x_j} v  (s,x) \|_\infty \leq L. 
\]  
\end{lemma}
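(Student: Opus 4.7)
The plan is to regard $\mathcal{X}^{s,x}$ as the solution of a standard time-inhomogeneous SDE driven by the (non-random) coefficients $A(u,y)\defeq\sum_{\mathbf{k}}\alpha_{\mathbf{k}}(y)\gamma_{\mathbf{k}}(u)$ and $B(u,y)\defeq\sum_{\mathbf{k}}\beta_{\mathbf{k}}(y)\gamma_{\mathbf{k}}(u)$. By Lemma \ref{lm:regularityBinteract} (together with (AC)), both $A$ and $B$ lie in $C^{2,1}_{b,b}(\mathbb{R}^d\times[0,T],\cdot)$, so they are Lipschitz in $y$ uniformly in $u$, with bounded first and second $y$-derivatives, and continuous in $u$. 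All of the classical regularity theory for flows of diffusions with smooth bounded coefficients (see e.g.\ the arguments used in \cite{szpruch2019}) is therefore available.

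For the first spatial derivative I would use the standard mean-square differentiability of the flow. Define the first variational process $J^{s,x}_u\defeq\partial_{x}\mathcal{X}^{s,x}_u$, which satisfies the linear SDE obtained by formally differentiating \eqref{eq:weprocess} in $x$; its drift and diffusion coefficients are $\partial_{y}A(u,\mathcal{X}^{s,x}_u)$ and $\partial_{y}B(u,\mathcal{X}^{s,x}_u)$, both of which are bounded by Lemma \ref{lm:regularityBinteract}. A Gr\"onwall / BDG argument then yields $\sup_{(s,x)}\sup_{u\in[s,t]}\mathbb{E}|J^{s,x}_u|^p\leq C_p$ uniformly. Dominated convergence justifies differentiating under the expectation to obtain
\begin{equation*}
\partial_{x_i} v(s,x)=\mathbb{E}\!\left[\nabla P(\mathcal{X}^{s,x}_t)\cdot J^{s,x}_t e_i\right],
\end{equation*}
which is bounded uniformly in $(s,x)$ by $\|\nabla P\|_{\infty}$ times the uniform $L^2$ bound on $J^{s,x}_t$. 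Continuity of $(s,x)\mapsto\partial_{x_i}v(s,x)$ follows from continuity in $L^2$ of $(s,x)\mapsto(\mathcal{X}^{s,x}_t,J^{s,x}_t)$ together with continuity of $\nabla P$.

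The second derivative is handled analogously by introducing the second variational process $H^{s,x}_u\defeq\partial^2_{x_ix_j}\mathcal{X}^{s,x}_u$, which solves a linear SDE with inhomogeneous terms of the form $\partial^2_{yy}A(u,\mathcal{X}^{s,x}_u)(J^{s,x}_u,J^{s,x}_u)$ and analogously for $B$, plus a linear-in-$H$ term driven by $\partial_yA,\partial_yB$. The bounds on these second $y$-derivatives (again from Lemma \ref{lm:regularityBinteract}) combined with the $L^p$ control on $J^{s,x}_u$ give $\sup_{(s,x)}\sup_{u\in[s,t]}\mathbb{E}|H^{s,x}_u|^2\leq C$ by Gr\"onwall. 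One then differentiates twice under the expectation, obtaining a bounded expression since $P\in C^2_b$, and collects the constants into the single bound $L$.

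Finally, for $\partial_s v$ I would apply It\^o's formula to $u\mapsto v(u,\mathcal{X}^{s,x}_u)$ on $[s,t]$, or equivalently invoke the Feynman--Kac backward Kolmogorov equation satisfied by $v$: on $[0,t)\times\mathbb{R}^d$,
\begin{equation*}
\partial_s v(s,x)+A(s,x)\cdot\nabla_x v(s,x)+\tfrac{1}{2}\operatorname{tr}\!\bigl(B(s,x)B(s,x)^{\top}\nabla^2_x v(s,x)\bigr)=0.
\end{equation*}
Since $A(s,\cdot)$, $B(s,\cdot)$ are continuous in $s$ with values in $C^2_b$, and $\nabla_x v$, $\nabla^2_x v$ are continuous in $(s,x)$ by the preceding steps, continuity of $\partial_s v$ is immediate. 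The main obstacle in this program is justifying the interchange of $\mathbb{E}$ and $\partial_x$ uniformly in $(s,x)$; this reduces to the uniform $L^p$ estimates on $J^{s,x}$ and $H^{s,x}$, which in turn hinges on the \emph{global} boundedness of $\partial_y A,\partial^2_{yy}A,\partial_y B,\partial^2_{yy}B$ provided by (AC) and the summability hypotheses therein.
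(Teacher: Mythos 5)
Your argument is correct and matches the approach the paper defers to (its proof of this lemma is a one-line citation of \cite{szpruch2019}, which uses exactly this variational-process argument: bounded first and second derivatives of the coefficients from Lemma \ref{lm:regularityBinteract}, uniform $L^p$ bounds on $J^{s,x}$ and $H^{s,x}$ via Gr\"onwall, and differentiation under the expectation with $P\in C^2_b$). The only step needing slightly more care is $\partial_s v$: rather than \emph{invoking} the Feynman--Kac PDE (which presupposes $v\in C^{1,2}$, the very thing being proved), one should derive time-differentiability from the flow property $v(s,x)=\mathbb{E}\bigl[v(s+h,\mathcal{X}^{s,x}_{s+h})\bigr]$ and a Taylor expansion using the already-established continuity of $\nabla_x v$ and $\nabla^2_x v$ --- but this is the standard resolution and the ingredients you assemble suffice for it.
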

\begin{proof}
We follow the same lines as in \cite{szpruch2019} to show this.
\end{proof}
Appying the Feynman-Kac theorem (\cite{MR601776}), one can find that  $v(\cdot,\cdot)$ satisfies the following Cauchy problem component-wise, 
\begin{equation}\label{eq:proofalphafinalnonintinteractnon}
 \left\{
\begin{array}{rl}
        \partial_s v (s,x)  + &\displaystyle\frac{1}{2}\smsec  \Big( \sum_{\mathbf{k}\in \mathbb{N}_0^d}\beta_{\mathbf{k}}(x)\gamma_{\mathbf{k}}(s) ( \sum_{\mathbf{k}\in \mathbb{N}_0^d}\beta_{\mathbf{k}}(x)\gamma_{\mathbf{k}}(s) )^T\Big)_{ij}  \partial^2_{x_i,x_j} v (s,x) \\
       +\displaystyle\sum_{j=1}^d \Big( &\sum_{\mathbf{k}\in \mathbb{N}_0^d}\alpha
_{\mathbf{k}}(x)\gamma_{\mathbf{k}}(s) \Big)_j  \partial_{x_j} v  (s,x) =0, \quad (s,x)\in[0,t]\times \mathbb{R}^d,\\
        v(t,x) &= P(x). \\
\end{array} 
\right.
\end{equation}

\begin{theorem}\label{lm:wkmain}
Let $P \in C^{2}_{b}( \mathbb{R}^d, \mathbb{R}^d)$, and processes $(X_t)$ and $({\tilde{X}_t^K})$ be defined in \eqref{eq:mlmceulerlineartargetinteract} and \eqref{eq:defxtminteract_project_K}, respectively. Assume \textbf{(Ker-Reg)}, \textbf{($\mu_0$-$L_p$)}, (AC) and (AF) hold. Then for any $t\in(0,T]$,
\[
	|\bbE[P(\tilde{X}_{t}^K)]-\bbE[P({X}_{t})]|\lesssim\sum_{|\mathbf{k}|>K}\sup_{0< t\leq T}|\gamma_{\mathbf{k}}(t)|.
\]
\end{theorem}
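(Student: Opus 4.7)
The plan is the classical weak-error scheme: apply It\^o's formula to the Kolmogorov function $v$ of the auxiliary Markov process~\eqref{eq:weprocess}, combine with the Cauchy problem~\eqref{eq:proofalphafinalnonintinteractnon} satisfied by that same $v$, and so express the difference $\bbE[P(\tilde X_t^K)]-\bbE[P(X_t)]$ as the time integral of a coefficient discrepancy. This discrepancy splits naturally into a pure truncation tail, which is already the quantity on the right-hand side, and a moment-error feedback in $\gamma^{\tilde X^K}-\gamma$, which must be closed by a Gronwall-type argument driven by the cross-sum conditions in (AC).

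First, I would use the identity $\int_{\bbR^d}\bbE[P(\mathcal{X}_t^{0,x})]\mu_0(dx)=\bbE[P(X_t)]$ quoted from the excerpt, combined with $\tilde X_0^K\sim\mu_0$ and the terminal condition $v(t,\cdot)=P(\cdot)$, to rewrite the weak error as $\bbE[v(t,\tilde X_t^K)-v(0,\tilde X_0^K)]$. Lemma~\ref{lm:regularityBinteract} and Lemma~\ref{eq:regularityofderivativeofvy} together provide the regularity needed to apply It\^o's formula to $s\mapsto v(s,\tilde X_s^K)$ along the dynamics~\eqref{eq:defxtminteract_project_K}. Taking expectations kills the martingale part, and subtracting~\eqref{eq:proofalphafinalnonintinteractnon} evaluated at $(s,\tilde X_s^K)$ cancels the $\partial_s v$ piece, producing
\begin{align*}
\bbE[P(\tilde X_t^K)]-\bbE[P(X_t)] = \bbE\!\int_0^t\!\Bigl\{&(A_K-A)(s,\tilde X_s^K)\cdot\nabla v(s,\tilde X_s^K)\\
&+\tfrac{1}{2}\smsec\bigl[(B_KB_K^T-BB^T)(s,\tilde X_s^K)\bigr]_{ij}\partial^2_{x_i x_j}v(s,\tilde X_s^K)\Bigr\}\,ds,
\end{align*}
where $A(s,x):=\sum_{\mathbf k\in\N_0^d}\alpha_{\mathbf k}(x)\gamma_{\mathbf k}(s)$ and $B(s,x)$ is defined analogously from $\beta_{\mathbf k}$.

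Next, I decompose
\[
(A_K-A)(s,x) = \sum_{|\mathbf{k}|\leq K}\alpha_{\mathbf{k}}(x)\Delta_{\mathbf{k}}(s) - \sum_{|\mathbf{k}|>K}\alpha_{\mathbf{k}}(x)\gamma_{\mathbf{k}}(s),\qquad \Delta_{\mathbf{k}}(s):=\gamma_{\mathbf{k}}^{\tilde X^K}(s)-\gamma_{\mathbf{k}}(s),
\]
(and identically for $B_K-B$, after first symmetrising the quadratic difference as $(B_K-B)B_K^T+B(B_K-B)^T$). Using (AC), the uniform bounds on $\nabla v,\nabla^2 v$ supplied by Lemma~\ref{eq:regularityofderivativeofvy}, and the moment bound $\sup_{s\leq T}\bbE|\tilde X_s^K|<\infty$ (immediate from \hLip and Gronwall on~\eqref{eq:defxtminteract_project_K}), the tail part is controlled by $\sum_{|\mathbf{k}|>K}\sup_{s}|\gamma_{\mathbf{k}}(s)|$, which is exactly the target bound; the moment-error part contributes at most $\mathrm{const}\cdot\int_0^t\sum_{|\mathbf{k}|\leq K}(A_{\mathbf{k},\alpha}+A_{\mathbf{k},\beta})|\Delta_{\mathbf{k}}(s)|\,ds$.

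The main obstacle is closing the feedback loop in $\Delta$. For this I re-apply the representation of the first step with $P$ replaced by $\varphi_{\mathbf{k}}$ for each $|\mathbf{k}|\leq K$; by (AF) every $\varphi_{\mathbf{k}}\in C_b^2$, so the auxiliary Kolmogorov function $v^{\varphi_{\mathbf{k}}}$ enjoys derivative bounds scaling in $D_{\mathbf{k},\varphi}$ and $L_{\mathbf{k},\varphi}$. Weighting the resulting bound for $|\Delta_{\mathbf{k}}(t)|$ by $A_{\mathbf{k},\alpha}+A_{\mathbf{k},\beta}$ and summing over $|\mathbf{k}|\leq K$, the four cross-sum conditions in (AC), namely $\sum L_{\mathbf{k},\varphi}A_{\mathbf{k},\alpha}\leq L_\varphi A_\alpha$, $\sum L_{\mathbf{k},\varphi}A_{\mathbf{k},\beta}\leq L_\varphi A_\beta$, $\sum D_{\mathbf{k},\varphi}B_{\mathbf{k},\alpha}\leq D_\varphi B_\alpha$ and $\sum D_{\mathbf{k},\varphi}B_{\mathbf{k},\beta}\leq D_\varphi B_\beta$, guarantee a finite universal prefactor, producing the closed integral inequality
\[
\Phi(t) \lesssim \sum_{|\mathbf{k}|>K}\sup_{s}|\gamma_{\mathbf{k}}(s)| + \int_0^t \Phi(s)\,ds,\qquad \Phi(t):=\sum_{|\mathbf{k}|\leq K}(A_{\mathbf{k},\alpha}+A_{\mathbf{k},\beta})|\Delta_{\mathbf{k}}(t)|.
\]
Gronwall's lemma yields $\Phi(t)\lesssim\sum_{|\mathbf{k}|>K}\sup_{s}|\gamma_{\mathbf{k}}(s)|$, and inserting this back into the representation of the first step completes the proof. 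The entire technical difficulty lies in the bookkeeping: each constant arising from the derivatives of $v^{\varphi_{\mathbf{k}}}$ must be absorbed by the summability supplied by (AC), uniformly in $K$.
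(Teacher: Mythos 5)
Your proposal is correct and follows essentially the same route as the paper: rewrite the weak error via the Kolmogorov function $v$ and Feynman--Kac, apply It\^o's formula to $v(s,\tilde X^K_s)$, cancel $\partial_s v$ against the Cauchy problem \eqref{eq:proofalphafinalnonintinteractnon}, split the coefficient discrepancy into a truncation tail plus a feedback term in $\gamma_{\mathbf{k}}^{\tilde X^K}-\gamma_{\mathbf{k}}$, and close that feedback by re-running the argument with $P=\varphi_{\mathbf{k}}$ and applying Gronwall. The only (harmless) deviation is bookkeeping: you run Gronwall on the weighted sum $\Phi(t)=\sum_{|\mathbf{k}|\le K}(A_{\mathbf{k},\alpha}+A_{\mathbf{k},\beta})|\Delta_{\mathbf{k}}(t)|$ using the cross-sum conditions in (AC), whereas the paper takes $\sup_{\mathbf{k}}|\Delta_{\mathbf{k}}(t)|$ and absorbs the sums $\sum_{|\mathbf{k}|\le K}\bbE|\alpha_{\mathbf{k}}(\tilde X^K_s)|\lesssim A_\alpha$ separately.
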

\begin{proof}
From definition of $v(\cdot,\cdot)$ in \eqref{eq:defvinteract}, we compute that
\begin{eqnarray}
\bbE[v(0,X_0)]=\int_{\mathbb{R}^d} v(0,x) \, \mu_0(dx) &  = & \int_{\mathbb{R}^d}\bbE \big[ P( \mathcal{X}^{0,x}_t) \big] \, \mu_0(dx) \nonumber \\
& = & \bbE \big[ \bbE[P( X_t) | X_0  ] \big].  \nonumber 
\end{eqnarray}
The Feynman-Kac theorem, and the fact that $\mu_0 = \mu^{\tilde{X}^K}_0$ give 
\begin{align*}
	\bbE[P(\tilde{X}_{t}^K)] - \bbE[P(X_t)] 
	&= \bbE[v(t,\tilde{X}_{t}^K)] - \bbE[v(0, {X}_{0})] \\
	&= \bbE[v(t,\tilde{X}_{t}^K)] - \bbE[v(0, \tilde{X}_{0}^K)].
\end{align*}  By It\^o's formula, together with definitions of $\alpha
_{\mathbf{k}}(\cdot)$ and $\beta_{\mathbf{k}}(\cdot)$,
\begin{eqnarray}
	&&\bbE[v(t,\tilde{X}_{t}^K)] - \bbE[v(0, \tilde{X}_{0}^K)] \nonumber \\ 
	& = & \bbE\bigg[\int_{0}^{t}\bigg(\partial_t v(s, \tilde{X}_{s}^K) +\sum_{j=1}^d \partial_{x_j} v (s,\tilde{X}_{s}^K)\sum_{|\mathbf{k}|\leq K}\alpha
_{\mathbf{k}}(\tilde{X}_{s}^K)\gamma_{\mathbf{k}}^{\tilde{X}^K}(s)+\dfrac{1}{2}\smsec\partial^2_{x_i,x_j} v  (s, \tilde{X}_{s}^K)\nonumber\\
&&\cdot \overline{a}_{ij}(\tilde{X}_{s}^K, \gamma_{\cdot}^{\tilde{X}^K}(s) )\bigg)ds+ \int_{0}^{t} \smfir\smfi
	\partial_{x_j} v   (s,\tilde{X}_{s}^K) \sum_{|\mathbf{k}|\leq K}\beta
_{\mathbf{k},{ji}}(\tilde{X}_{s}^K)\gamma^{\tilde{X}^K}_{\mathbf{k}}(s)dW_s^{(i)} \, \bigg], \nonumber 
\end{eqnarray}
where $\overline{a}(x,y)=(\sum_{|\mathbf{k}|\leq K}\beta_{\mathbf{k}}(x)y_{\mathbf{k}})\cdot(\sum_{|\mathbf{k}|\leq K}\beta_{\mathbf{k}}(x)y_{\mathbf{k}})^T$. (AC) and the condition \hvdiffs~implies
\[
	\bbE\left[\int_{0}^{t} \smfir\smfi	\partial_{x_j} v   (s,\tilde{X}_{s}^K)\sum_{|\mathbf{k}|\leq K}\beta
_{\mathbf{k},{ji}}(\tilde{X}_{s}^K)\gamma^{\tilde{X}^K}_{\mathbf{k}}(s)dW_s^{(i)} \right]=0.
\]
Subsequently, using the fact that $v(\cdot,\cdot)$ satisfies PDE \eqref{eq:proofalphafinalnonintinteractnon} at $(s,\tilde{X}_{s}^K)$, we have
\begin{align*}
 \partial_s v (s,\tilde{X}_{s}^K)  + &\displaystyle\frac{1}{2}\smsec  \Big( \sum_{\mathbf{k}\in \mathbb{N}_0^d}^{\infty}\beta_{\mathbf{k}}(\tilde{X}_{s}^K)\gamma_{\mathbf{k}}(s) \Big( \sum_{\mathbf{k}\in \mathbb{N}_0^d}\beta_{\mathbf{k}}(\tilde{X}_{s}^K)\gamma_{\mathbf{k}}(s) \Big)^T\Big)_{ij}  \partial^2_{x_i,x_j} v (s,\tilde{X}_{s}^K) \\
       +\displaystyle\sum_{j=1}^d \Big( &\sum_{\mathbf{k}\in \mathbb{N}_0^d}\alpha
_{\mathbf{k}}(\tilde{X}_{s}^K)\gamma_{\mathbf{k}}(s) \Big)_j  \partial_{x_j} v  (s,\tilde{X}_{s}^K) =0.
\end{align*}
Then we rewrite the coefficient functions by splitting each infinite sum into $\sum_{|\mathbf{k}|\leq K}$ and $\sum_{|\mathbf{k}|> K}$ ( $\mathbf{k}_1$ or $\mathbf{k}_2$ is dummy variable for $\mathbf{k}$)  and we obtain
\begin{align*}
 \partial_s v (s,\tilde{X}_{s}^K)  + &\displaystyle\frac{1}{2}\smsec  \Big( (\sum_{|\mathbf{k_1}|\leq K}\beta_{\mathbf{k_1}}(\tilde{X}_{s}^K)\gamma_{\mathbf{k_1}}(s) +\sum_{|\mathbf{k_1}|>K}\beta_{\mathbf{k_1}}(\tilde{X}_{s}^K)\gamma_{\mathbf{k_1}}(s))\cdot\\
&( \sum_{|\mathbf{k_2}|\leq K}\beta_{\mathbf{k_2}}(\tilde{X}_{s}^K)\gamma_{\mathbf{k_2}}(s) +\sum_{|\mathbf{k_2}|>K}\beta_{\mathbf{k_2}}(\tilde{X}_{s}^K)\gamma_{\mathbf{k_2}}(s))^T\Big)_{ij}  \partial^2_{x_i,x_j} v (s,\tilde{X}_{s}^K) \\
       +\displaystyle\sum_{j=1}^d \Big( &\sum_{|\mathbf{k}|\leq K}\alpha
_{\mathbf{k}}(\tilde{X}_{s}^K)\gamma_{\mathbf{k}}(s) +\sum_{|\mathbf{k}|> K}\alpha
_{\mathbf{k}}(\tilde{X}_{s}^K)\gamma_{\mathbf{k}}(s)\Big)_j  \partial_{x_j} v  (s,\tilde{X}_{s}^K) =0.
\end{align*}
Further expanding the coefficient for $\partial^2_{x_i,x_j} v (s,\tilde{X}_{s}^K)$ in PDE, we have (four terms)
\begin{align*}
 &\Big( (\sum_{|\mathbf{k_1}|\leq K}\beta_{\mathbf{k_1}}(\tilde{X}_{s}^K)\gamma_{\mathbf{k_1}}(s) +\sum_{|\mathbf{k_1}|>K}\beta_{\mathbf{k_1}}(\tilde{X}_{s}^K)\gamma_{\mathbf{k_1}}(s))( \sum_{|\mathbf{k_2}|\leq K}\beta_{\mathbf{k_2}}(\tilde{X}_{s}^K)\gamma_{\mathbf{k_2}}(s) +\sum_{|\mathbf{k_2}|>K}\beta_{\mathbf{k_2}}(\tilde{X}_{s}^K)\gamma_{\mathbf{k_2}}(s))^T\Big)_{ij}\\
 &=\Big(\sum_{|\mathbf{k_1}|\leq K}\beta_{\mathbf{k_1}}(\tilde{X}_{s}^K)\gamma_{\mathbf{k_1}}(s)\cdot (\sum_{|\mathbf{k_2}|\leq K}\beta_{\mathbf{k_2}}(\tilde{X}_{s}^K)\gamma_{\mathbf{k_2}}(s))^T+\sum_{|\mathbf{k_1}|\leq K}\beta_{\mathbf{k_1}}(\tilde{X}_{s}^K)\gamma_{\mathbf{k_1}}(s)\cdot(\sum_{|\mathbf{k_2}|>K}\beta_{\mathbf{k_2}}(\tilde{X}_{s}^K)\gamma_{\mathbf{k_2}}(s))^T\\
 &+\sum_{|\mathbf{k_1}|>K}\beta_{\mathbf{k_1}}(\tilde{X}_{s}^K)\gamma_{\mathbf{k_1}}(s)\cdot(\sum_{|\mathbf{k_2}|\leq K}\beta_{\mathbf{k_2}}(\tilde{X}_{s}^K)\gamma_{\mathbf{k_2}}(s))^T+\sum_{|\mathbf{k_1}|>K}\beta_{\mathbf{k_1}}(\tilde{X}_{s}^K)\gamma_{\mathbf{k_1}}(s)\cdot(\sum_{|\mathbf{k_2}|>K}\beta_{\mathbf{k_2}}(\tilde{X}_{s}^K)\gamma_{\mathbf{k_2}}(s))^T\Big)_{ij}
\end{align*}
Taking this expansion, we compute
\allowdisplaybreaks
\begin{align*}
	\bbE[&v(t,\tilde{X}_{t}^K)] - \bbE[v(0, \tilde{X}_{0}^K)]  \\ 
	& = \bbE\bigg[\int_{0}^{t}\bigg[\sum_{j=1}^d \partial_{x_j} v (s,\tilde{X}_{s}^K)\bigg(\sum_{|\mathbf{k}|\leq K}\alpha
_{\mathbf{k}}(\tilde{X}_{s}^K)(\gamma_{\mathbf{k}}^{\tilde{X}^K}(s)-\gamma_{\mathbf{k}}(s))-\sum_{|\mathbf{k}|> K}\alpha
_{\mathbf{k}}(\tilde{X}_{s}^K)\gamma_{\mathbf{k}}(s)\bigg)\\
&+\dfrac{1}{2}\smsec\partial^2_{x_i,x_j} v  (s, \tilde{X}_{s}^K)\bigg((\overline{a}_{ij}(\tilde{X}_{s}^K, \gamma_{\cdot}^{\tilde{X}^K}(s)) - \overline{a}_{ij}(\tilde{X}_{s}^K,\gamma_{\cdot}(s))) -\sum_{|\mathbf{k}_1|> K}\sum_{|\mathbf{k}_2|\leq K}(\beta_{\mathbf{k}_1,i}(\tilde{X}_{s}^K)\gamma_{\mathbf{k}_1}(s))\\
&\cdot(\beta_{\mathbf{k}_2,j}(\tilde{X}_{s}^K)
\gamma_{\mathbf{k}_2}(s))^T-\sum_{|\mathbf{k}_1|\leq K}\sum_{|\mathbf{k}_2|> K}(\beta_{\mathbf{k}_1,i}(\tilde{X}_{s}^K)\gamma_{\mathbf{k}_1}(s))\cdot(\beta_{\mathbf{k}_2,j}(\tilde{X}_{s}^K)\gamma_{\mathbf{k}_2}(s))^T\\
&-\sum_{|\mathbf{k}_1|> K}\sum_{|\mathbf{k}_2|> K}(\beta_{\mathbf{k}_1,i}(\tilde{X}_{s}^K)\gamma_{\mathbf{k}_1}(s))\cdot(\beta_{\mathbf{k}_2,j}(\tilde{X}_{s}^K)\gamma_{\mathbf{k}_2}(s))^T\bigg)\bigg]ds\bigg]\\
&=\int_0^t\bbE[\sum_{i=1}^5R_i(s)]ds,
\end{align*} 
where
\begin{align*}
&R_1(s) = \sum_{j=1}^d \partial_{x_j} v (s,\tilde{X}_{s}^K)\bigg(\sum_{|\mathbf{k}|\leq K}\alpha
_{\mathbf{k}}(\tilde{X}_{s}^K)(\gamma_{\mathbf{k}}^{\tilde{X}^K}(s)-\gamma_{\mathbf{k}}(s))-\sum_{|\mathbf{k}|> K}\alpha
_{\mathbf{k}}(\tilde{X}_{s}^K)\gamma_{\mathbf{k}}(s)\bigg)\nonumber\\
&R_2(s) = \dfrac{1}{2}\smsec\partial^2_{x_i,x_j} v  (s, \tilde{X}_{s}^K)\bigg(\overline{a}_{ij}(\tilde{X}_{s}^K, \gamma_{\cdot}^{\tilde{X}^K}(s)) - \overline{a}_{ij}(\tilde{X}_{s}^K,\gamma_{\cdot}(s))\bigg)\nonumber\\
&R_3(s) =- \dfrac{1}{2}\smsec\partial^2_{x_i,x_j} v  (s, \tilde{X}_{s}^K)\bigg(\sum_{|\mathbf{k}_1|> K}\sum_{|\mathbf{k}_2|\leq K}(\beta_{\mathbf{k}_1,i}(\tilde{X}_{s}^K)\gamma_{\mathbf{k}_1}(s))\cdot(\beta_{\mathbf{k}_2,j}(\tilde{X}_{s}^K)
\gamma_{\mathbf{k}_2}(s))^T\bigg)\nonumber\\
&R_4(s) = -\dfrac{1}{2}\smsec\partial^2_{x_i,x_j} v  (s, \tilde{X}_{s}^K)\bigg(\sum_{|\mathbf{k}_1|\leq K}\sum_{|\mathbf{k}_2|> K}(\beta_{\mathbf{k}_1,i}(\tilde{X}_{s}^K)\gamma_{\mathbf{k}_1}(s))\cdot(\beta_{\mathbf{k}_2,j}(\tilde{X}_{s}^K)\gamma_{\mathbf{k}_2}(s))^T\bigg)\nonumber\\
&R_5(s) =  -\dfrac{1}{2}\smsec\partial^2_{x_i,x_j} v  (s, \tilde{X}_{s}^K)\bigg(\sum_{|\mathbf{k}_1|> K}\sum_{|\mathbf{k}_2|> K}(\beta_{\mathbf{k}_1,i}(\tilde{X}_{s}^K)\gamma_{\mathbf{k}_1}(s))\cdot(\beta_{\mathbf{k}_2,j}(\tilde{X}_{s}^K)\gamma_{\mathbf{k}_2}(s))^T\bigg)
\end{align*}
\paragraph*{Error $R_1$:}
\hvdiffs~(bounded derivatives for $v$)  gives
\begin{align*}
|\bbE[R_1(s)]| &\lesssim \bbE\bigg[\sum_{j=1}^d|\partial_{x_j} v (s,\tilde{X}_{s}^K)|\cdot|\sum_{|\mathbf{k}|\leq K}\alpha
_{\mathbf{k}}(\tilde{X}_{s}^K)(\gamma_{\mathbf{k}}^{\tilde{X}^K}(s)-\gamma_{\mathbf{k}}(s))-\sum_{|\mathbf{k}|> K}\alpha
_{\mathbf{k}}(\tilde{X}_{s}^K)\gamma_{\mathbf{k}}(s)|\bigg]\\
&\lesssim \bbE|\sum_{|\mathbf{k}|\leq K}\alpha
_{\mathbf{k}}(\tilde{X}_{s}^K)(\gamma_{\mathbf{k}}^{\tilde{X}^K}(s)-\gamma_{\mathbf{k}}(s))-\sum_{|\mathbf{k}|> K}\alpha
_{\mathbf{k}}(\tilde{X}_{s}^K)\gamma_{\mathbf{k}}(s)|\\
&\lesssim \bbE|\sum_{|\mathbf{k}|\leq K}\alpha
_{\mathbf{k}}(\tilde{X}_{s}^K)(\gamma_{\mathbf{k}}^{\tilde{X}^K}(s)-\gamma_{\mathbf{k}}(s))|+\bbE|\sum_{|\mathbf{k}|> K}\alpha
_{\mathbf{k}}(\tilde{X}_{s}^K)\gamma_{\mathbf{k}}(s)|\\
&(\text{non-randomness property of $\gamma_{\mathbf{k}}^{\tilde{X}^K}(s)$ and $\gamma_{\mathbf{k}}(s)$})\\
&\lesssim \sup_{\mathbf{k}}|\gamma_{\mathbf{k}}^{\tilde{X}^K}(s)-\gamma_{\mathbf{k}}(s)|\cdot\sum_{|\mathbf{k}|\leq K}\bbE|\alpha_{\mathbf{k}}(\tilde{X}_{s}^K)|+\sup_{\mathbf{k}}\bbE|\alpha
_{\mathbf{k}}(\tilde{X}_{s}^K)|\cdot\sum_{|\mathbf{k}|> K}|\gamma_{\mathbf{k}}(s)|.
\end{align*}
By (AC) and the fact $\tilde{X}_{s}^K$ is integrable, it implies $\sum_{|\mathbf{k}|\leq K}\bbE|\alpha_{\mathbf{k}}(\tilde{X}_{s}^K)|\lesssim\sum_{|\mathbf{k}|\in N^d_0}A_{\mathbf{k},\varphi}\leq A_{\alpha}$ and $\sup_{\mathbf{k}}\bbE|\alpha_{\mathbf{k}}(\tilde{X}_{s}^K)|\lesssim A_{\alpha}$ . Then we show
\begin{align}\label{eq:wkm1}
|\bbE[R_1(s)]|\lesssim \sup_{\mathbf{k}}|\gamma_{\mathbf{k}}^{\tilde{X}^K}(s)-\gamma_{\mathbf{k}}(s)|+\sum_{|\mathbf{k}|> K}|\gamma_{\mathbf{k}}(s)|.
\end{align}

\paragraph*{Error $R_2$:}
Again, \hvdiffs~(bounded second derivatives for $v$)  gives
\begin{align*}
|\bbE[R_2(s)]| &\lesssim \bbE\bigg[\dfrac{1}{2}\smsec|\partial^2_{x_i,x_j} v  (s, \tilde{X}_{s}^K)|\cdot|\overline{a}_{ij}(\tilde{X}_{s}^K, \gamma_{\cdot}^{\tilde{X}^K}(s)) - \overline{a}_{ij}(\tilde{X}_{s}^K,\gamma_{\cdot}(s))|\bigg]\\
&\lesssim \bbE|\overline{a}_{ij}(\tilde{X}_{s}^K, \gamma_{\cdot}^{\tilde{X}^K}(s)) - \overline{a}_{ij}(\tilde{X}_{s}^K,\gamma_{\cdot}(s))|\\
&(\text{non-randomness property of $\gamma_{\mathbf{k}}^{\tilde{X}^K}(s)$ and $\gamma_{\mathbf{k}}(s)$} )\\
&\lesssim \sum_{|\mathbf{k}_1|\leq K}\sum_{|\mathbf{k}_2|\leq K}|\gamma_{\mathbf{k}_1}^{\tilde{X}^K}(s)M_{\beta_{ij}}(\tilde{X}_{s}^K) (\gamma_{\mathbf{k}_2}^{\tilde{X}^K}(s))^T-\gamma_{\mathbf{k}_1}(s)M_{\beta_{ij}}(\tilde{X}_{s}^K) (\gamma_{\mathbf{k}_2}(s))^T|,
\end{align*}
where $M_{\beta_{ij}}(x)=\bbE|\beta_{\mathbf{k}_1,i}(x)\cdot(\beta_{\mathbf{k}_2,j}(x))^T|$. Then (AC) and the fact $\tilde{X}_{s}^K$ is integrable imply $\sum_{|\mathbf{k}_1|\leq K}\sum_{|\mathbf{k}_2|\leq K}M_{\beta_{ij}}(\tilde{X}_{s}^K)\lesssim A_\beta^2$. We find
\begin{align*}
|\bbE[R_2(s)]|&\lesssim \sup_{\mathbf{k_1},\mathbf{k_2}}|\gamma_{\mathbf{k}_1}^{\tilde{X}^K}(s)(\gamma_{\mathbf{k}_2}^{\tilde{X}^K}(s))^T-\gamma_{\mathbf{k}_1}(s)(\gamma_{\mathbf{k}_2}(s))^T|\sum_{|\mathbf{k}_1|\leq K}\sum_{|\mathbf{k}_2|\leq K}M_{\beta_{ij}}(\tilde{X}_{s}^K)\\
&\lesssim \sup_{\mathbf{k_1},\mathbf{k_2}}|\gamma_{\mathbf{k}_1}^{\tilde{X}^K}(s)(\gamma_{\mathbf{k}_2}^{\tilde{X}^K}(s))^T-\gamma_{\mathbf{k}_1}(s)(\gamma_{\mathbf{k}_2}(s))^T|.
\end{align*}
By adding and subtracting $1\times1$ matrix $\gamma_{\mathbf{k}_2}^{\tilde{X}^K}(s)(\gamma_{\mathbf{k}_1}(s))^T$, triangle inequality, (AC) and (AF), we obtain
\begin{align}
	& \sup_{\mathbf{k_1},\mathbf{k_2}}|\gamma_{\mathbf{k}_1}^{\tilde{X}^K}(s)(\gamma_{\mathbf{k}_2}^{\tilde{X}^K}(s))^T\mp\gamma_{\mathbf{k}_2}^{\tilde{X}^K}(s)(\gamma_{\mathbf{k}_1}(s))^T-\gamma_{\mathbf{k}_1}(s)(\gamma_{\mathbf{k}_2}(s))^T|\nonumber\\
	=& \sup_{\mathbf{k_1},\mathbf{k_2}}|\gamma_{\mathbf{k}_2}^{\tilde{X}^K}(s)(\gamma_{\mathbf{k}_1}^{\tilde{X}^K}(s)-\gamma_{\mathbf{k}_1}(s))^T+\gamma_{\mathbf{k}_1}(s)(\gamma_{\mathbf{k}_2}^{\tilde{X}^K}(s)-\gamma_{\mathbf{k}_2}(s))^T|\nonumber\\
	&( \text{(AC) means uniform boundedness of}~|\gamma_{\mathbf{k}_2}^{\tilde{X}^K}|~\text{and}~|\gamma_{\mathbf{k}_1}|)\nonumber\\
	\lesssim& \sup_{\mathbf{k_1},\mathbf{k_2}}(|\gamma_{\mathbf{k}_1}^{\tilde{X}^K}(s)-\gamma_{\mathbf{k}_1}(s)|+|\gamma_{\mathbf{k}_2}^{\tilde{X}^K}(s)-\gamma_{\mathbf{k}_2}(s)|)\nonumber\\
	=& \sup_{\mathbf{k_1}}|\gamma_{\mathbf{k}_1}^{\tilde{X}^K}(s)-\gamma_{\mathbf{k}_1}(s)| + \sup_{\mathbf{k_2}}|\gamma_{\mathbf{k}_2}^{\tilde{X}^K}(s)-\gamma_{\mathbf{k}_2}(s)|\nonumber\\
	\lesssim&\sup_{\mathbf{k}}|\gamma_{\mathbf{k}}^{\tilde{X}^K}(s)-\gamma_{\mathbf{k}}(s)|.\nonumber
\end{align}
It means
\begin{align}\label{eq:wkm2}
|\bbE[R_2(s)]|\lesssim\sup_{\mathbf{k}}|\gamma_{\mathbf{k}}^{\tilde{X}^K}(s)-\gamma_{\mathbf{k}}(s)|.
\end{align}

\paragraph*{Error $R_3$:}
\hvdiffs~(bounded second derivatives for $v$) and (AC)  result in 
\begin{align*}
|\bbE[R_3(s)]| &\lesssim \bbE\bigg[\dfrac{1}{2}\smsec|\partial^2_{x_i,x_j} v  (s, \tilde{X}_{s}^K)|\cdot|\sum_{|\mathbf{k}_1|> K}\sum_{|\mathbf{k}_2|\leq K}(\beta_{\mathbf{k}_1,i}(\tilde{X}_{s}^K)\gamma_{\mathbf{k}_1}(s))\cdot(\beta_{\mathbf{k}_2,j}(\tilde{X}_{s}^K)
\gamma_{\mathbf{k}_2}(s))^T|\bigg]\\
&\lesssim \bbE|\sum_{|\mathbf{k}_1|> K}\sum_{|\mathbf{k}_2|\leq K}(\beta_{\mathbf{k}_1,i}(\tilde{X}_{s}^K)\gamma_{\mathbf{k}_1}(s))\cdot(\beta_{\mathbf{k}_2,j}(\tilde{X}_{s}^K)
\gamma_{\mathbf{k}_2}(s))^T|\\
&(\text{non-randomness property of $\gamma_{\mathbf{k}}(s)$})\\
&\lesssim \sum_{|\mathbf{k}_1|> K}\sum_{|\mathbf{k}_2|\leq K}|\gamma_{\mathbf{k}_1}(s)M_{\beta_{ij}}(\tilde{X}_{s}^K) (\gamma_{\mathbf{k}_2}(s))^T|\\
&\lesssim \sum_{|\mathbf{k}_1|> K}|\gamma_{\mathbf{k}_1}(s)|\sup_{\mathbf{k}_2}|\gamma_{\mathbf{k}_2}(s)|\sum_{|\mathbf{k}_1|> K}\sum_{|\mathbf{k}_2|\leq K}M_{\beta_{ij}}(\tilde{X}_{s}^K).
\end{align*}
Then (AC) tells us uniform boundedness of $|\gamma_{\mathbf{k}}|$ and $\sum_{|\mathbf{k}_1|> K}\sum_{|\mathbf{k}_2|\leq K}M_{\beta_{ij}}(\tilde{X}_{s}^K) \lesssim A_\beta^2 $, and we obtain
\begin{align}\label{eq:wkm3}
|\bbE[R_3(s)]|\lesssim  \sum_{|\mathbf{k}|> K}|\gamma_{\mathbf{k}}(s)|.
\end{align}
Similarly, by the condition on the second-order derivatives from \hvdiffs, we can establish that
\begin{align}\label{eq:wkm4}
|\bbE[R_4(s)]|\lesssim \sum_{|\mathbf{k}|> K}|\gamma_{\mathbf{k}}(s)|~\text{and}~|\bbE[R_5(s)]| \lesssim \sum_{|\mathbf{k}|> K}|\gamma_{\mathbf{k}}(s)|.
\end{align}
Combining \eqref{eq:wkm1}, \eqref{eq:wkm2}, \eqref{eq:wkm3}, \eqref{eq:wkm4}, it is clear that
\begin{align}
|\bbE[P(\tilde{X}_{t}^K)]-\bbE[P({X}_{t})]|&=|\int_0^t\bbE[\sum_{i=1}^5R_i(s)]ds|\lesssim \int_0^t\sum_{i=1}^5|\bbE[R_i(s)]|ds\nonumber\\
&\lesssim \int_0^t \bigg(\sup_{\mathbf{k}}|\gamma_{\mathbf{k}}^{\tilde{X}^K}(s)-\gamma_{\mathbf{k}}(s)|+\sum_{|\mathbf{k}|> K}|\gamma_{\mathbf{k}}(s)|\bigg)ds.\label{eq:gP_res}
\end{align}
Next by (AF), we can choose $P$ to be $\varphi_{\mathbf{k}}$ for any $\mathbf{k}\in N^d_0$ and obtain
\begin{align*}
|\bbE[\varphi_{\mathbf{k}}(\tilde{X}_{t}^K)]-\bbE[\varphi_{\mathbf{k}}({X}_{t})]|&=|\int_0^t\bbE[\sum_{i=1}^5R_i(s)]ds|\lesssim \int_0^t\sum_{i=1}^5|\bbE[R_i(s)]|ds\\
&\lesssim \int_0^t \bigg(\sup_{\mathbf{k}}|\gamma_{\mathbf{k}}^{\tilde{X}^K}(s)-\gamma_{\mathbf{k}}(s)|+\sum_{|\mathbf{k}|> K}|\gamma_{\mathbf{k}}(s)|\bigg)ds.
\end{align*}
Then taking supremum over $\mathbf{k}$ for both sides and the Gronwall's lemma result in
\begin{align}\label{eq:phi_res}
\sup_{\mathbf{k}}|\bbE[\varphi_{\mathbf{k}}(\tilde{X}_{t}^K)]-\bbE[\varphi_{\mathbf{k}}({X}_{t})]|\lesssim \sum_{|\mathbf{k}|> K}\sup_{0< t\leq T}|\gamma_{\mathbf{k}}(t)|,\quad \forall t\in(0,T].
\end{align}
Thus, combining \eqref{eq:gP_res} and \eqref{eq:phi_res} completes the proof.
\end{proof}

\section{Main result of the iterative MLMC algorithm} \label{eq:mrimlmc}

We combine the MLMC and Picard iteration to improve the computational efficiency. Fix $m$ and $L$. Take the same family of partitions $\Pi^\ell=\{0=t_{0}^{\ell},\ldots,t^{\ell}_k,\ldots, T=t_{2^{\ell}}^\ell \} $, $\ell=0,\ldots,L$, with $t_k^\ell-t_{k-1}^\ell=h_\ell$ and define $\eta_\ell{(t)}\defeq t_k^\ell\, \text{if}~t\in[t_k^\ell,t_{k+1}^\ell)$. In this case, we mainly approximate \eqref{eq:defxtminteract_project_K}  by $\{\overline{X}_t^{m,K}\}_{t\in[0,T]}$  defined as
\begin{gather}\label{eq:xminnonteract}
	d\overline{X}^{m,K}_t = \sum_{|\mathbf{k}|\leq K}\alpha
_{\mathbf{k}}(\overline{X}^{m,K}_t)\gamma^{\overline{X}^{m-1,K}}_{\mathbf{k}}(t)\, dt + \sum_{|\mathbf{k}|\leq K}\beta
_{\mathbf{k}}(\overline{X}^{m,K}_t)\gamma^{\overline{X}^{m-1,K}}_{\mathbf{k}}(t)\, dW_t\,.
\end{gather}
where 
$
	\gamma^{\overline{X}^{m-1,K}}_{\mathbf{k}}(t) := \bbE
\left[  \varphi_{\mathbf{k}}(\overline{X}_{t}^{m-1,K})\right].
$
To simulate \eqref{eq:xminnonteract}, we only need to approximate the relevant expectations, without dependence on $x$, with respect to the law of the process at the previous Picard step $m-1$ and the time grid $\Pi^{L}$, i.e.   
\[
\begin{split}
	 \left(\mathbb{E}[\varphi_{\mathbf{k}}(\overline {X}^{m-1,K}_0)],\ldots, \mathbb{E}[\varphi_{\mathbf{k}}(\overline{X}^{m-1,K}_{t_k^{L}})]\ldots,  \mathbb{E}[\varphi_{\mathbf{k}}(\overline{X}^{m-1,K}_{T})]\right),~|\mathbf{k}|\leq K.
\end{split}
\]
By approximating these expectations with the MLMC samples $\mathcal{M}^{(m-1,K)}_{\cdot,t}$, the Euler type \textit{iterative MLMC particle} method relies on the system of SDEs:
\begin{equation} \label{eq:contiousYinteractnon}
 dY^{i,K, m,\ell}_t =  \sum_{|\mathbf{k}|\leq K}\alpha
_{\mathbf{k}}(Y^{i,K, m,\ell}_{\eta_{\ell} (t)})\mathcal{M}^{(m-1,K)}_{\mathbf{k},\eta_{\ell} (t)}  \, dt +\sum_{|\mathbf{k}|\leq K}\beta
_{\mathbf{k}}(Y^{i,K, m,\ell}_{\eta_{\ell} (t)})\mathcal{M}^{(m-1,K)}_{\mathbf{k},\eta_{\ell} (t)} \, dW^{i,m}_t\,, 
\end{equation}
where $Y^{i,0, \ell}= X_0$ and $\mathcal{M}^{(m-1,K)}_{\mathbf{k},t}$ is defined as
\begin{equation}\label{eq:nonM1}
	\mathcal{M}^{(m-1,K)}_{\mathbf{k},t} = \sum_{\ell=0}^L\dfrac{1}{N_{m-1,\ell}}\sum_{i=1}^{N_{m-1,\ell}}\varphi_{\mathbf{k}}(Y_t^{i,K,m-1,\ell})-\varphi_{\mathbf{k}}(Y_t^{i,K,m-1,\ell-1}).
\end{equation}
where $\varphi_{\mathbf{k}}(Y^{\cdot,\cdot,\cdot,-1})=0$. Under the assumptions listed in Section \ref{sec:desmlmc}, we derive the precise error bound for \eqref{eq:contiousYinteractnon} as confirmed in Theorem \ref{lm:mseboundfinalinteract} by following similar lines as presented in \cite{szpruch2019}.
\begin{theorem}\label{lm:mseboundfinalinteract}
Assume \textbf{(Ker-Reg)}{ } \, and \textbf{($\mu_0$-$L_p$)}{ } .  Fix $M>0$ and let $P \in C^2_b (\mathbb{R}^d,\mathbb{R}^d)$.  Define
$
MSE_t^{(M,K)}(P) := \mathbb{E}[( \mathcal{M}^{(M,K)}_{t}(P) - \mathbb{E}[P(\tilde{X}^K_{t})] )^2].
$
Then there exists a constant $c>0$ (independent of the choices of $M$, $L$ and $\{N_{m,\ell}\}_{m,\ell}$) such that for every $t \in [0,T]$,
\[
	MSE_{\eta_L(t)}^{(M,K)} (P)\leq c\bigg\{h_L^2+\sum_{m=1}^{M}\frac{c^{M-m}}{(M-m)!}\cdot\sum_{\ell=0}^L\frac{h_\ell}{N_{m,\ell}} +\frac{c^{M-1}}{M!}\bigg\},
\]
\end{theorem}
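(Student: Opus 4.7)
The plan is to follow the iterative MLMC analysis of \cite{szpruch2019}, adapted to the projected dynamics \eqref{eq:xminnonteract} whose coefficients are, by (AC), globally Lipschitz and of linear growth uniformly in $K$. The three terms $h_L^2$, $\sum_m \frac{c^{M-m}}{(M-m)!}\sum_\ell \frac{h_\ell}{N_{m,\ell}}$, and $\frac{c^{M-1}}{M!}$ naturally correspond to three distinct error sources: Euler time-discretisation bias, MLMC statistical error propagated across Picard iterations, and Picard truncation bias. First I would introduce an auxiliary process $\widehat{Y}^{i,K,m,\ell}$ defined exactly as \eqref{eq:contiousYinteractnon} but with the MLMC estimator $\mathcal{M}^{(m-1,K)}_{\mathbf{k},t}$ replaced by its exact counterpart $\gamma^{\overline{X}^{m-1,K}}_{\mathbf{k}}(t)$, and write
\[
\mathcal{M}^{(M,K)}_{\eta_L(t)}(P) - \bbE[P(\tilde{X}^K_t)] = \mathrm{(stat)} + \mathrm{(Euler)} + \mathrm{(Picard)}
\]
by triangle inequality along the chain $\mathcal{M}^{(M,K)} \to \widehat{Y}^{K,M,L} \to \overline{X}^{M,K} \to \tilde{X}^K$. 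Squaring and $(a+b+c)^2 \lesssim a^2+b^2+c^2$ reduces the task to three separate bounds.

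For the \emph{Picard residual} I would prove by induction on $m$ that $\sup_{t\in[0,T]}\bbE|\overline{X}^{m,K}_t - \tilde{X}^K_t|^2 \leq (CT)^m/m!$, using It\^o's isometry, the Lipschitz bounds from (AC), and $|\gamma^{\overline{X}^{m-1,K}}_{\mathbf{k}}(t) - \gamma^{\tilde{X}^K}_{\mathbf{k}}(t)|^2 \lesssim \bbE|\overline{X}^{m-1,K}_t - \tilde{X}^K_t|^2$ coming from (AF); the factorial emerges from iterating a Gronwall-type time integral $M$ times, yielding the $c^{M-1}/M!$ contribution after applying $P\in C^2_b$. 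For the \emph{Euler bias}, Lemma \ref{lm:regularityBinteract} gives $C^{2,1}_{b,b}$ regularity of the linearised drift and diffusion driving each Picard iterate $\overline{X}^{m,K}$, so the standard weak-order-one Euler estimate yields $|\bbE[P(\widehat{Y}^{K,M,L}_{\eta_L(t)})] - \bbE[P(\overline{X}^{M,K}_{\eta_L(t)})]| \lesssim h_L$, hence $h_L^2$ after squaring.

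The core of the argument is the \emph{MLMC statistical error}. At Picard step $m$, conditional on the $(m-1)$-th step, each level correction $\varphi_{\mathbf{k}}(Y^{i,K,m,\ell}_t) - \varphi_{\mathbf{k}}(Y^{i,K,m,\ell-1}_t)$ has variance $O(h_\ell)$ by strong Euler convergence combined with the Lipschitz property of $\varphi_{\mathbf{k}}$ from (AF), so $\mathrm{Var}(\mathcal{M}^{(m,K)}_{\mathbf{k},t}) \lesssim \sum_\ell h_\ell/N_{m,\ell}$. The coupling of level $m$ to level $m-1$ through the shared coefficient randomness $\mathcal{M}^{(m-1,K)}_{\mathbf{k},\cdot}$ produces a recursion of the schematic form
\[
E_m \leq C\sum_{\ell=0}^L \frac{h_\ell}{N_{m,\ell}} + \frac{CT}{m}\, E_{m-1},
\]
where $E_m$ denotes the statistical MSE contribution at Picard level $m$; solving the recursion via the standard Picard superposition identity produces the factorial weights $c^{M-m}/(M-m)!$.

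The main obstacle will be the clean book-keeping of how the noise $\mathcal{M}^{(m-1,K)}_{\mathbf{k},\cdot}$ propagates through the nonlinear drift and diffusion of \eqref{eq:contiousYinteractnon}: one must simultaneously track the $L^2$ distance $\bbE|Y^{i,K,m,\ell}_t - \overline{X}^{m,K}_t|^2$ and the bias induced on the expectations $\gamma^{\overline{X}^{m,K}}_{\mathbf{k}}$ in a way that preserves the factorial at each composition, which is exactly where the $1/(m!)$ speedup over a naive Gronwall iterate originates. Because (AC) makes $A_K$ and $B_K$ globally Lipschitz and linearly growing uniformly in $K$, the analysis of \cite{szpruch2019} transfers with only cosmetic changes; summing the three contributions and specialising $t$ to $\eta_L(t)$ yields the stated bound.
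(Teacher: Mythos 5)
Your proposal is correct and follows essentially the same route as the paper, which itself gives no proof of this theorem beyond the remark that it is obtained ``by following similar lines as presented in'' \cite{szpruch2019}: the decomposition into Euler weak bias ($h_L^2$), MLMC statistical error propagated through the Picard coupling (the factorially weighted sum), and Picard truncation ($c^{M-1}/M!$) is exactly the structure of that argument, transferred to the projected coefficients via the uniform-in-$K$ Lipschitz and linear-growth bounds from (AC). One small caveat: the recursion should be written in the integral form $E_m(t)\leq C\sum_{\ell}h_\ell/N_{m,\ell}+C\int_0^t E_{m-1}(s)\,ds$, whose unrolling yields the stated weights $(Ct)^{M-m}/(M-m)!$, rather than the literal prefactor $CT/m$ in your schematic, which would produce $m!/M!$ instead.
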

where $\calM_{t}^{(M,K)}(P)$ is given by
\begin{equation}\label{eq:convergeresnon}
	\calM_{t}^{(M,K)}(P) = \sum_{\ell=0}^L\dfrac{1}{N_{M,\ell}}\sum_{i=1}^{N_{M,\ell}}P(Y_t^{i,K,M,\ell})-P(Y_t^{i,K,M,\ell-1}),\quad P(Y_t^{\cdot,\cdot,\cdot,-1})=0.
\end{equation}

\begin{remark}
If one use iterative MLMC containing $M$ Picard steps to reach the required MSE accuracy level $\epsilon^2$, the cost of the first Picard step is $\sum_{\ell=0}^{L} h^{-1}_{\ell} N_{0, \ell}$ and that of rest Picard steps is $\sum_{m=2}^M\bigg(\sum_{\ell=0}^{L} h^{-1}_{\ell} N_{m, \ell} +K\cdot\sum_{\ell'=0}^{L}  h^{-1}_{\ell'} N_{m-1, \ell'} \bigg)$. Thus the sum of these cost gives the total cost.
\end{remark}
\subsection{Algorithm}
The complete MLMC particle approximation via projection is presented in a schematic form as Algorithm \ref{Algorithm}.

\begin{algorithm*}[ht]
\KwIn{Initial measure $\mu^0$ for $X^0$, MSE level $\epsilon$, payoff function $P(\cdot):\mathbb{R}^d\rightarrow\mathbb{R}$ and $M$, the number of Picard steps.}
\KwOut{$\mathcal{M}_{T}^M(P)$, the approximation for the quantity $\mathbb{E}[P(X_T)]$.}
\nl Find $K(\epsilon)$ according to \eqref{eq:Kbound}\;
\nl Fix $L_0$. Apply standard MLMC to the process $X^0$ to calculate the approximation matrix of size $K\times(1+2^{L_0})$
\[
\bar{D}^0:=
 \begin{pmatrix}
  \mathcal{M}_{1,t_0}^{(0)}&,\ldots,&\mathcal{M}_{1,t_{2^{L_{0}}}}^{(0)}
 \end{pmatrix}.
 \]
\nl \For{ $m = 1$ to $M-1$}{
	\nl Fix $L_m$, the largest level in Multilevel in $mth$ Picard step\; 
	\nl Conditioning on $\bar{D}^{m-1}$ (starting from $\bar{D}^0$), take \eqref{eq:contiousYinteractnon} and run MLMC to obtain the matrix of of size $K\times(1+2^{L_0})$ :
	\vspace{-0.2cm}
	\[
	\bar{D}^m:=
 \begin{pmatrix}
  \mathcal{M}_{1,t_0}^{(m)}&,\ldots,&\mathcal{M}_{1,t_{2^{L_{0}}}}^{(m)}
 \end{pmatrix}.
 \]}
\nl Fix $L_M$, the largest level in MLMC at $Mth$ Picard step\;
\nl Conditioning on $\bar{D}^{M-1}$, run standard MLMC with interpolation to obtain the final vectors of approximations  $(\mathcal{M}_{t_0}^M(P),\ldots,\mathcal{M}_{t_{2^{L_M}}}^M(P))$\;
\nl {\bf Return} $\mathcal{M}_{t_{2^{L_M}}}(P)$, i.e. $\mathcal{M}_{T}(P)$.
    \caption{{\bf Iterative MLMC via projections} \label{Algorithm}}

\end{algorithm*}

\section{Complexity}
\textbf{Approximation of Expectations}
\ First, we present the complexity theorem for projections w.r.t. the basis \eqref{eq:herm_func}, that is, we consider the particle system \eqref{eq:defxtminteract_project_Keuler}. 
\begin{theorem}\label{thm:c2nonintK}
Let $P \in C^{2}_{b}( \mathbb{R}^d, \mathbb{R}^d)$ be a Lipschitz continuous function.
Assume (AC), (AF) and (AD) hold. Then there exists a constant $c$ such that for any $\epsilon<e^{-1}$ there exists $K$, $L$ and $N$ such that 
\[
	MSE \vcentcolon=  \sup_{0\leq t\leq T}\bbE\bigg[\Bigl(\frac{1}{N}\sum_{i=1}^NP(\bar{X}^{i,N,K}_{\eta_L(t)}) - \bbE[P(X_{\eta_L(t)})]\Bigr)^2\bigg]< \epsilon^2,
\]
with the corresponding computational cost  being of order $\epsilon^{-3}\log(\epsilon^{-1})$.
\end{theorem}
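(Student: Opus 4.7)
The plan is to decompose the mean square error into a variance contribution and a squared bias contribution, and then to split the bias into three pieces: the Euler time-discretisation error, the propagation-of-chaos error at fixed $K$, and the projection truncation error. Each piece is already controlled either by results in the excerpt or by standard estimates for interacting particle systems, and the overall complexity will then follow by optimising $K$, $L$ and $N$ against the tail behaviour given by assumption (AD).

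First I would write the usual bias–variance decomposition
\begin{align*}
MSE \lesssim \mathrm{Var}\!\left(\tfrac{1}{N}\sum_{i=1}^N P(\bar X^{i,N,K}_{\eta_L(t)})\right) + \Bigl(\bbE[P(\bar X^{1,N,K}_{\eta_L(t)})] - \bbE[P(X_{\eta_L(t)})]\Bigr)^{\!2}.
\end{align*}
The variance term is of order $1/N$ by the exchangeability of the particles together with the $L_2$-moment bounds implied by \textbf{(Ker-Reg)} and \textbf{($\mu_0$-$L_p$)}. The bias term is split via the triangle inequality into (i) the Euler-scheme weak error for the non-interacting SDE \eqref{eq:defxtminteract_project_K}, (ii) the propagation-of-chaos weak error between the particle system \eqref{eq:defxtminteract_project_Keuler} and its mean-field limit $\tilde X^K$, and (iii) the projection error $|\bbE[P(\tilde X^K_t)]-\bbE[P(X_t)]|$ which is precisely the quantity bounded by Theorem~\ref{lm:wkmain}.

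Next I would estimate each bias term: term (i) is of order $h_L$ by the classical Euler weak-rate result since the drift/diffusion of \eqref{eq:defxtminteract_project_K} are globally Lipschitz in $x$ with time-regular coefficients by Lemma~\ref{lm:regularityBinteract} and (AC); term (ii) is of order $1/N$ by the (non-interacting) propagation-of-chaos weak rate, again using (AC) and smoothness of $P$; term (iii) is bounded by $\sum_{|\mathbf k|>K}\sup_{0<t\le T}|\gamma_{\mathbf k}(t)|$ by Theorem~\ref{lm:wkmain}, and this sum is in turn controlled using (AD), which gives
\begin{align*}
\sum_{|\mathbf k|>K}\sup_{0<t\le T}|\gamma_{\mathbf k}(t)| \lesssim \sum_{|\mathbf k|>K} e^{-\gamma^\circ|\mathbf k|} \lesssim K^{d-1} e^{-\gamma^\circ K}.
\end{align*}

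Finally I would balance parameters. Choose $K=c_1\log(\epsilon^{-1})$ (with $c_1$ large enough that the projection tail is $\lesssim\epsilon$), $h_L\asymp\epsilon$ so that $L\asymp\log_2(\epsilon^{-1})$, and $N\asymp\epsilon^{-2}$ so that both the variance and the $1/N$ particle bias are below $\epsilon^2$ and $\epsilon$ respectively; all three bias pieces are then $\lesssim\epsilon$ and the variance is $\lesssim\epsilon^2$, giving $MSE\lesssim\epsilon^2$. For the cost, each Euler step requires computing the $K$ empirical coefficients $\hat\gamma_{\mathbf k}$ (cost $KN$) and then advancing each particle using the $K$-term projected drift and diffusion (cost $KN$), hence cost per step is of order $KN$; multiplying by the $1/h_L\asymp\epsilon^{-1}$ time steps gives total cost
\begin{align*}
K\,N\,h_L^{-1}\;\asymp\;\log(\epsilon^{-1})\cdot\epsilon^{-2}\cdot\epsilon^{-1}\;=\;\epsilon^{-3}\log(\epsilon^{-1}),
\end{align*}
as claimed. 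The main technical obstacle is the careful verification that the $1/N$ weak rate for propagation of chaos remains uniform in $K$: one needs the Lipschitz and linear-growth constants in (AC) to be summable, so that the projected coefficients inherit constants independent of $K$, and this is precisely the content of the summability assumptions in (AC) combined with the uniform bound on $D_{\mathbf k,\varphi},L_{\mathbf k,\varphi}$ for the Hermite basis from Example~\ref{exm:hermite}.
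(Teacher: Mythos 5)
Your proposal follows essentially the same route as the paper: the paper's proof is a terse three-term bound $(\sum_{|\mathbf k|>K}|\gamma_{\mathbf k}|)^2 + h_L^2 + 1/N$ obtained from Theorem~\ref{lm:wkmain}, (AD) with the geometric-series tail, and ``standard results for the underlying particle system,'' followed by exactly your parameter choices and the cost count $N\cdot K\cdot h_L^{-1}$. You merely make explicit what the paper leaves implicit (the bias--variance split, the Euler and propagation-of-chaos rates, and the uniformity in $K$ via the summability in (AC)), so the argument is correct and matches the paper's.
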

\begin{proof}
From Theorem \ref{lm:wkmain}, (AD), the property of geometric series and standard results for the underlying particle system, we obtain
\begin{align*}
	\sup_{0\leq t\leq T}\bbE\bigg[\Bigl(\frac{1}{N}\sum_{i=1}^NP(\bar{X}^{i,N,K}_{\eta_L(t)}) - \bbE[P(X_{\eta_L(t)})]\Bigr)^2\bigg]&\lesssim\bigg\{(\sum_{\mathbf{k}>K} |\gamma_{\mathbf{k}}|)^2+h_L^2+\frac{1}{N}\bigg\}\\
	&\lesssim\bigg\{\exp(-2\gamma^\circ K)+h_L^2+\frac{1}{N}\bigg\}.
\end{align*}
It induces the following optimisation problem
 \begin{align}
	&\min_{K,L, N}C(K,L,N)=N\cdot K\cdot h_L^{-1} \label{eq:constraintmain1K}\\&\text{s.t.}~ 	\exp(-2\gamma^\circ K)+h_L^2+\frac{1}{N}\lesssim\epsilon^2\label{eq:constraintcom1K}.
\end{align}
Then \eqref{eq:constraintcom1K} leads to $K\lesssim \log(\epsilon^{-1})$, $h_L\lesssim\epsilon$ and $N\lesssim \epsilon^{-2}$. Combining these and \eqref{eq:constraintmain1K} completes the proof.
\end{proof}
Next, we present the complexity theorem for iterative MLMC with projected coefficients approch for the particle system \eqref{eq:contiousYinteractnon}.
\begin{theorem}\label{thm:c2nonint}
 Let $P \in C^{2}_{b}( \mathbb{R}^d, \mathbb{R}^d)$ be a Lipschitz continuous function and $Y^{\cdot, K, m,\ell}$ be defined in \eqref{eq:contiousYinteractnon}. Assume that \textbf{(Ker-Reg)}{ }, \textbf{($\mu_0$-$L_p$)}{ }, (AC), (AF) and (AD) hold.
 Then there exists a constant $c$ such that for any $\epsilon<e^{-1}$ there exists $M$, $K$, $L$ and $\{N_\ell\}_{\ell=0}^L$ such that
\[
	MSE^{(M,K)} \vcentcolon=  \sup_{0\leq t\leq T}\bbE\bigg[(\calM_{\eta_L(t)}^{(M,K)}(P) - \bbE[P(X_{\eta_L(t)})])^2\bigg]< \epsilon^2,
\]
where $\calM_{t}^{(M,K)}(P)$ is given by \eqref{eq:convergeresnon}, the corresponding computational cost  being of order $\epsilon^{-2}|\log\epsilon|^4$. \end{theorem}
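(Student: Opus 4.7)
The plan is to combine a bias--variance decomposition with the MSE bound of Theorem \ref{lm:mseboundfinalinteract} (which controls $\mathcal{M}^{(M,K)}_t(P)$ around $\mathbb{E}[P(\tilde{X}^K_t)]$) and the weak-error bound of Theorem \ref{lm:wkmain} (which controls $\mathbb{E}[P(\tilde{X}^K_t)]$ around $\mathbb{E}[P(X_t)]$), then to calibrate $K$, $L$, $M$ and the per-level sample sizes $\{N_{m,\ell}\}$ so that every contribution to $MSE^{(M,K)}$ is $O(\epsilon^2)$ while the total cost is minimised. Concretely, the identity $(a-b)^2\le 2(a-c)^2+2(c-b)^2$ with $c=\mathbb{E}[P(\tilde{X}^K_{\eta_L(t)})]$ yields
\begin{align*}
MSE^{(M,K)} \lesssim \sup_t MSE^{(M,K)}_{\eta_L(t)}(P) + \sup_t\bigl|\mathbb{E}[P(\tilde X^K_{\eta_L(t)})]-\mathbb{E}[P(X_{\eta_L(t)})]\bigr|^2.
\end{align*}
Invoking the two aforementioned theorems, and using (AD) together with the counting estimate $\#\{\mathbf{k}:|\mathbf{k}|=n\}\lesssim n^{d-1}$ to bound $\sum_{|\mathbf{k}|>K}\sup_t|\gamma_\mathbf{k}(t)|\lesssim K^{d-1}e^{-\gamma^\circ K}\lesssim e^{-\tilde{\gamma} K}$ for any $\tilde{\gamma}<\gamma^\circ$, one arrives at
\begin{align*}
MSE^{(M,K)}\lesssim e^{-2\tilde{\gamma} K}+h_L^2+\sum_{m=1}^M\frac{c^{M-m}}{(M-m)!}\sum_{\ell=0}^L\frac{h_\ell}{N_{m,\ell}}+\frac{c^{M-1}}{M!}.
\end{align*}

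Next I would force each of the four contributions to be of order $\epsilon^2$. Setting $e^{-2\tilde{\gamma} K}\lesssim\epsilon^2$ and $h_L^2\lesssim\epsilon^2$ gives $K\asymp|\log\epsilon|$ and $L\asymp|\log\epsilon|$ (since $h_L=T\cdot 2^{-L}$); applying Stirling to $c^{M-1}/M!\lesssim\epsilon^2$ gives $M\asymp|\log\epsilon|/\log|\log\epsilon|$, which I will bound crudely from above by $|\log\epsilon|$ in the cost analysis. Because $\sum_{m=1}^M c^{M-m}/(M-m)!\le e^c$ uniformly in $M$, it suffices to impose the uniform-in-$m$ constraint $\sum_{\ell=0}^L h_\ell/N_{m,\ell}\lesssim\epsilon^2$. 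A standard Lagrange-multiplier argument minimising the per-Picard-step simulation cost $\sum_\ell h_\ell^{-1}N_{m,\ell}$ under this constraint gives the allocation $N_{m,\ell}\propto L h_\ell/\epsilon^2$, with optimal per-step cost of order $L^2/\epsilon^2$.

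Finally, using the cost accounting from the remark following Theorem \ref{lm:mseboundfinalinteract} (where the dominant contribution at Picard step $m\ge 2$ is the projection term $K\sum_{\ell'} h_{\ell'}^{-1}N_{m-1,\ell'}$), the total cost is
\begin{align*}
C \;\asymp\; M\cdot K\cdot \frac{L^2}{\epsilon^2} \;\asymp\; |\log\epsilon|\cdot|\log\epsilon|\cdot\frac{|\log\epsilon|^2}{\epsilon^2} \;=\; \epsilon^{-2}|\log\epsilon|^4,
\end{align*}
matching the claim. The main obstacle is the bookkeeping around the factorial weights $c^{M-m}/(M-m)!$: one must verify that a uniform (in $m$) MLMC allocation is indeed compatible with the target complexity, and that no cross-contamination between Picard steps spoils the Picard-telescoping structure that produced the $c^{M-1}/M!$ factor in Theorem \ref{lm:mseboundfinalinteract}. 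In principle one could exploit the strong damping $1/(M-m)!$ for early $m$ to coarsen their allocations, but the savings are at most sub-logarithmic and are in any case absorbed into the conservative bound $M\leq|\log\epsilon|$ that produces the fourth log factor.
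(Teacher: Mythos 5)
Your proposal is correct and follows essentially the same route as the paper: the same bias--variance decomposition pivoting on $\bbE[P(\tilde{X}^K_{\eta_L(t)})]$, the same invocation of Theorem~\ref{lm:wkmain} together with (AD) for the projection bias and of Theorem~\ref{lm:mseboundfinalinteract} for the MLMC/Picard error, and the same calibration $K,M,L\asymp|\log\epsilon|$ with per-step MLMC cost $\epsilon^{-2}|\log\epsilon|^2$ yielding $\epsilon^{-2}|\log\epsilon|^4$. The only differences are that you make explicit two points the paper compresses (the multi-index counting behind $\sum_{|\mathbf{k}|>K}|\gamma_{\mathbf{k}}|\lesssim e^{-\tilde\gamma K}$, and the Lagrange-multiplier allocation that the paper simply cites from Giles), which is fine.
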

\begin{proof}
With (AC) and (AF), Theorem \ref{lm:wkmain} and (AD) imply
\begin{align*}
\sup_{0\leq t\leq T}\bbE\bigg[(\calM_{\eta_L(t)}^{(M,K)}(P) &- \bbE[P(X_{\eta_L(t)})])^2\bigg] \lesssim \bigg\{|\mathbb{E}[P(\tilde{X}^K_{\eta_L(t)})]-\bbE[P(X_{\eta_L(t)})]|^2|\\
&+\mathbb{E}|\calM_{\eta_L(t)}^{(M,K)}(P)-\mathbb{E}[P(\tilde{X}^K_{\eta_L(t)})]|^2\bigg\}\\
& \lesssim (\sum_{\mathbf{k}>K} |\gamma_{\mathbf{k}}|)^2+\mathbb{E}|\calM_{\eta_L(t)}^{(M,K)}(P)-\mathbb{E}[P(\tilde{X}^K_u)]|^2\\
&\lesssim \exp(-2\gamma^\circ K) + \mathbb{E}|\calM_{\eta_L(t)}^{(M,K)}(P)-\mathbb{E}[P(\tilde{X}^K_u)]|^2.
\end{align*}
By \textbf{(Ker-Reg)}{ } and \textbf{($\mu_0$-$L_p$)}{ }, Theorem \ref{lm:mseboundfinalinteract} further gives
\begin{align*}
\sup_{0\leq t\leq T}\bbE\bigg[(\calM_{\eta_L(t)}^{(M,K)}(P) &- \bbE[P({X}_{\eta_L(t)})])^2\bigg] \lesssim \bigg\{\exp(-2\gamma^\circ K) +h_L^2+\sum_{\ell=0}^L\frac{h_\ell}{N_\ell}\\
&+\int_{0}^{T}\bbE|[\calM_{\cdot,\eta_L(t)}^{(M-1,K)}(\varphi_\cdot)- \bbE[\varphi_{\cdot}(\tilde{X}_{\eta_L(t)}^K)]|^2dt\bigg\}\\
& \lesssim\cdots\lesssim\bigg\{\exp(-2\gamma^\circ K) +h_L^2+\sum_{\ell=0}^L\frac{h_\ell}{N_{\ell}}+\frac{1}{M!}\bigg\},
\end{align*}
where $N_{\ell} = \max_m\{N_{m,\ell}\}$. It induces the following optimisation problem  
\begin{align}
	&\min_{M,K,L, \{N_\ell\}_{\ell=0}^L}C(M,K,L,N_0,\ldots,N_L)=M\cdot K\cdot\sum_{\ell=0}^L h_\ell^{-1} N_\ell\label{eq:constraintmain1}\\&\text{s.t.}~ 	\exp(-2\gamma^\circ K) +h_L^2+\sum_{\ell=0}^L\frac{h_\ell}{N_{\ell}}+\frac{1}{M!}\lesssim\epsilon^2\label{eq:constraintcom1}.
\end{align}
In a similar way as in Giles's complexity theorem in \cite{giles2015multilevel}, we see that the only difference is presence of multiplicative constants $M$ and $K$  in \eqref{eq:constraintmain1} and additive cost functions $\exp(-2\gamma^\circ K) $ and $\frac{1}{M!}$ in \eqref{eq:constraintcom1}. First, we have 
\begin{align}\label{eq:Kbound}
\exp(-2\gamma^\circ K) \lesssim \epsilon^2 \implies K\lesssim  \log(\epsilon^{-1}).
\end{align} 
By the Stirling approximation in \cite{10.2307/2308012}, given by $\sqrt{2\pi}n^{n+\frac{1}{2}}e^{-n}\leq n!, \forall n\geq 1,$ , along with \eqref{eq:Kbound}, we obtain
\begin{align}\label{eq:Mestimate}
	\frac{1}{M!}\lesssim \frac{1}{\sqrt{2\pi}(\frac{M}{e})^M}\lesssim \epsilon^2\implies  M\lesssim\log(\epsilon^{-1}).
\end{align}
From \cite{giles2008multilevel}, we know that $\sum_{\ell=0}^{L}N_\ell h_\ell^{-1}\lesssim \epsilon^{-2}|\log\epsilon|^2$. Combining this, \eqref{eq:constraintmain1}, \eqref{eq:Kbound} and \eqref{eq:Mestimate} yields the result.

\end{proof}
%

\textbf{Density estimation} Let us now discuss the estimation of the densities $\mu_{t},$ $t\geq0.$ Fix
some $t>0,$ $K\in \mathbb{N}^d$ and set
\[
\widehat{\mu}_{t}^{K}(y):=\sum_{|\mathbf{k}|\leq K}\gamma_{\mathbf{k}}^{A}(t)\varphi_{\mathbf{k}}(y)w(y),
\]
where $\gamma_{\mathbf{k}}^{A}(t)$ is a type of approximation for $\gamma_{\mathbf{k}}$ such as $\hat{\gamma}_{\mathbf{k}}$ in \eqref{eq:defxtminteract_project_Keuler} and $\mathcal{M}_{\mathbf{k},t}^{(M)}$ in \eqref{eq:contiousYinteractnon}.
We again work with MSE in the similar form of \eqref{eq:msegeneral} and we obtain
\begin{align}\label{eq:dsstart}
\widehat{MSE}&\defeq\sup_{0< t\leq T}\mathbb{E}\int|\widehat{\mu}_{t}^{K}(y)-\mu_{t}(y)|^{2}w^{-1}(y)\,dy\nonumber\\
&=\sum_{|\mathbf{\mathbf{k}}|\leq K}\sup_{0< t\leq T}\mathbb{E}\left[  |\gamma_{\mathbf{k}}^{A}(t)-\gamma_{\mathbf{k}}(t)|^{2}\right]
+\sum_{|\mathbf{k}|>K}\sup_{0< t\leq T}|\gamma_{\mathbf{k}}(t)|^{2},
\end{align}
If we assume that the coefficients \(b\) and \(\sigma\) are infinitely smooth w.r.t the basis \eqref{eq:herm_func} in the sense that  
\begin{eqnarray*}
A_{\mathbf{k},\alpha}\lesssim \exp(-\alpha^\circ|\mathbf{k}|), \quad A_{\mathbf{k},\beta}\lesssim \exp(-\beta^\circ|\mathbf{k}|),\quad |\mathbf{k}|\to \infty
\end{eqnarray*}
for some constants \(\alpha^\circ,\beta^\circ>0,\) then it can be shown along the same lines as in the proof of Theorem~2.2 in \cite{antonelli2002rate} that 
\begin{eqnarray}\label{eq:gammadecay}
|\gamma_{\mathbf{k}}(t)|\lesssim \exp(-\gamma^\circ|\mathbf{k}|), \quad |\mathbf{k}|\to \infty
\end{eqnarray}
for some \(\gamma^\circ>0,\) provided that the diffusion coefficient satisfies the H\"ormander 
condition. As shown in Example~\ref{exm:hermite}, \(D_{\mathbf{k},\varphi}\) and \(L_{\mathbf{k},\varphi}\) are uniformly bounded in \(\mathbf{k}.\) To ensure $MSE<\epsilon_0^2$, combing \eqref{eq:dsstart} and \eqref{eq:gammadecay}, together with the property of geometric series implies $K\lesssim |\log(\epsilon_0)|$. Therefore, for any $|\mathbf{k}|<K$, we need
\[
	\sup_{0\leq t\leq T}\mathbb{E}\left[  |\gamma_{\mathbf{k}}^{A}(t)-\gamma_{\mathbf{k}}(t)|^{2}\right]= \sup_{0\leq t\leq T}\mathbb{E}\left[  |\gamma_{\mathbf{k}}^{A}(t)-\mathbb{E}[\varphi(X_t)]|^{2}\right]	\lesssim \dfrac{\epsilon_0^2}{K}\lesssim  \dfrac{\epsilon_0^2}{|\log(\epsilon_0)|}
\]
It means if we set the accuracy level $\epsilon$ as $\frac{\epsilon_0}{\sqrt{|\log(\epsilon_0)|}}$ in previous complexity theorems for approximation of expectations,
 we find the optimal parameter setting corresponding to $\gamma_{\mathbf{k}}^{A}(t)$ for both methods. By computation, the complexities are in the magnitude of 
 \[
 \mathcal{O}\left(\epsilon_0^{-3}{|\log(\epsilon_0)|^{5/2}}\right)
 \]
 and
 \[
 \mathcal{O}\left({\epsilon_0}^{-2}|\log\epsilon_0|^5\right),
 \] 
 respectively. 
\section{Numerical Experiments}
\paragraph*{\textbf{Projection particle method}}
Consider the MVSDE of the form: 
\begin{eqnarray}\label{eq:oriMVSDE}
dX_t=\bbE_{X'}\left[Q(X_t-X'_t)\right]\,dt+\sigma\,dW_t,\quad t\in [0,1],\quad X_0 = 0.5,
\end{eqnarray}
i.e. of the form \eqref{eq:mlmceulerlineartargetinteract} with \(\bar{b}(x,y)=Q(x-y),\) \(\bar{\sigma}(x,y)=\sigma\) and \(\mu_0(x)=(1/\sqrt{2\pi}) e^{-x^2/2}.\) Let us use the Hermite basis (see section \ref{sec:Projections}) to approximate the density of \(X_t\) for any \(t\in [0,1].\) In the case \(Q(x)=e^{-x^2/2},\) we explicitly derive
\begin{eqnarray*}
\int_{\mathbb{R}} e^{-(x-y)^{2}/2-x^{2}/2}H_{n}(x)\,dx&=&\frac{e^{-y^{2}/4}}{2}\int e^{-(z-y)^{2}/4}H_{n}(z/2)\,dz
\\
&=& \sqrt{\pi}\,\frac{e^{-y^{2}/4}}{2}\left(\frac{1}{2}\right)^{n-1}(2y)^{n}.
\end{eqnarray*}
As a result
\begin{eqnarray*}
\alpha_n(y)=\int e^{-(x-y)^{2}/2-x^{2}/2}\overline{H}_{n}(x)\,dx=\pi^{1/4}\left(\frac{1}{2}\right)^{n/2}\frac{y^{n}}{\sqrt{n!}}e^{-y^{2}/4},
\end{eqnarray*}
where \(\overline{H}_{n}\) stands for the normalised Hernite polynomial of order \(n.\) We take \(\sigma=0.1.\)  Using the Euler scheme with time step \(h=1/L=0.01\),  we first simulate \(N=500\) paths of the time discretised  process \(\bar{X}^{\cdot,N}.
\)  Next,   by means of the closed form expressions for \(\alpha_n,\)   we generate \(N\) paths of the projected approximating process  \(\bar{X}^{\cdot,K,N},\) \(K\in \{1,\ldots,20\}\) using the same Wiener increments as for \(\bar{X}^{\cdot,N},\) so that the approximations \(\bar{X}^{\cdot,N}\)  and \(\bar{X}^{\cdot,K,N}\) are coupled. Finally, we compute the strong approximation error 
\begin{eqnarray*}
E_{N,K}=\sqrt{\frac{1}{N}\sum_{i=1}^{N} \bigl(\bar{X}_{1}^{i,K,N}-\bar{X}_{1}^{i,N}\bigr)^2 }
\end{eqnarray*}
and record times needed to compute approximations \(\bar{X}_{1}^{\cdot,N}\) and \(\bar{X}_{1}^{\cdot,K,N},\) respectively. Figure~\ref{fig:ppm} shows the logarithm of   \(E_{N,K}\) versus the logarithm of the corresponding computational time differences for values \(K\in \{1,\ldots,20\}\). As can be seen, the relation between  logarithmic strong error and  logarithmic computational time gain can be well approximated by a linear function. On the left-hand side of Figure~\ref{fig:ppm} we depict the projection estimate for the density of \(X_1\) corresponding to \(K=10.\) 

\begin{figure}[!htbp]
\centering
\begin{subfigure}{.49\textwidth}
  \centering
  \includegraphics[width=.9\linewidth]{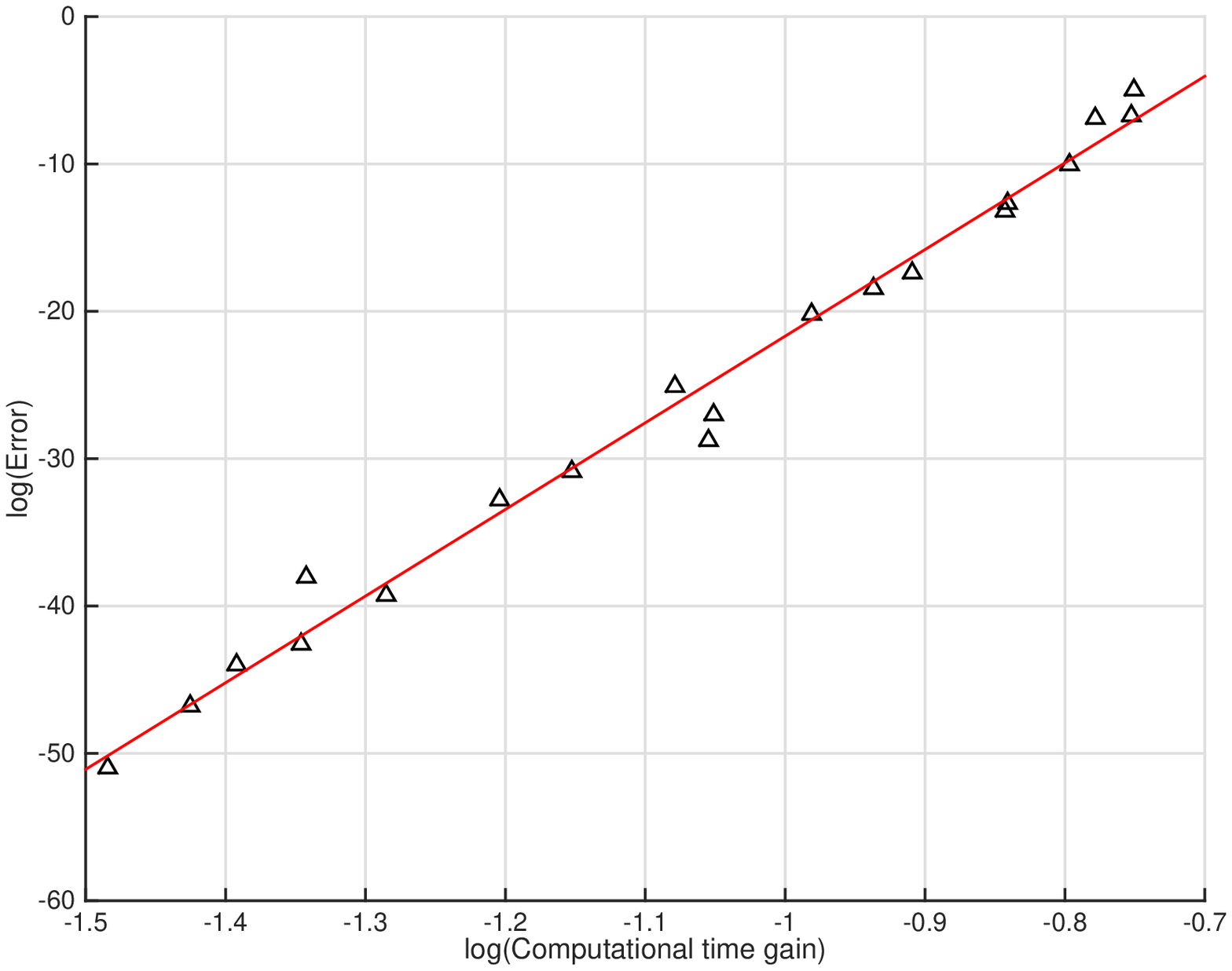}
   \caption{Strong Error $E_{500,K}$ between PPM and Chaos}
  \label{fig:ppm1}
\end{subfigure}
\centering
\begin{subfigure}{.49\textwidth}
  \centering
  \includegraphics[width=.9\linewidth]{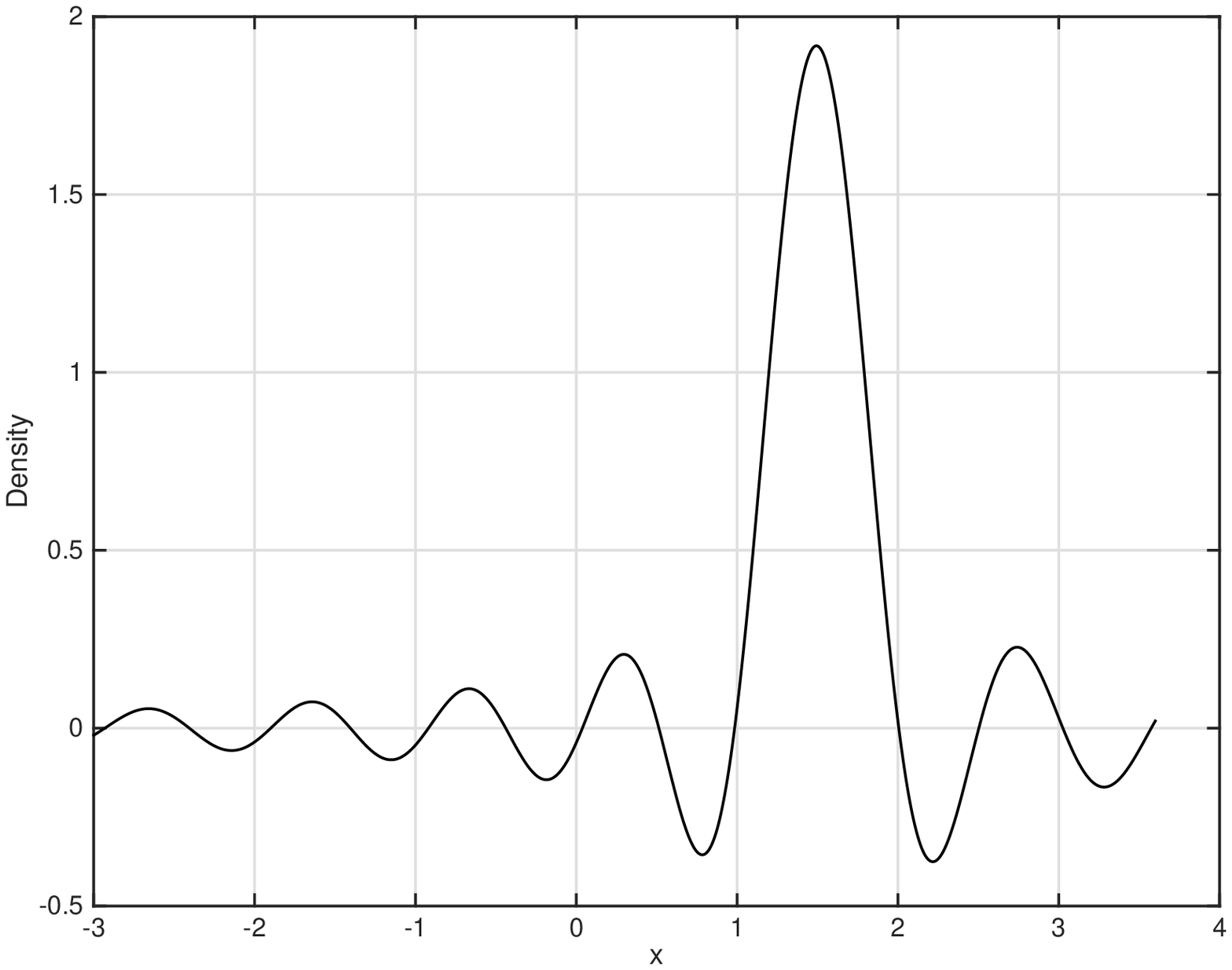}
\caption{Estimated density of $X_1$ under $K=10$}
  \label{fig:ppm2}
\end{subfigure}
\caption{Left: Strong error $E_{500,K}$ between the solution of projected (see \eqref{eq:defxtminteract_project_Keuler}) and non-projected time-discretised particle systems (see \eqref{eq:eulerpartcle}) versus the difference (gain) in computational time. Right: Estimated density of $X_1$ using 11 basis functions.}
\label{fig:ppm}
\end{figure}
\begin{remark}
Note that in Figure \ref{fig:ppm} (also Figure \ref{fig:denest}) the approximated density takes negative values but for example it can be resolved by a certain shifting transformation (see section 2.7 in \protect\cite{belomestny2005implementing}).
\end{remark}

\paragraph*{\textbf{Iterated MLMC on $Kth$ order Hermite Projected SDE}}
Likely, consider one-dimensional $Kth$ order Hermite Projected SDE \eqref{eq:Kres}:
 \begin{align}\label{eq:Kres}
 dX_t = \sum_{k=0}^K\bbE[\varphi_k(X_t)] \alpha_k(X_t)dt +\sigma dW_t, \quad t\in [0,1],\quad X_0=0.5,
 \end{align}
 where $T =1, \sigma = 0.1$, $\varphi_k(x) = \bar{H}_k(x)e^{-\frac{x^2}{2}}$ and $\alpha_k(x)=\pi^{1/4}\left(\frac{1}{2}\right)^{k/2}\frac{x^{k}}{\sqrt{n!}}e^{-x^{2}/4}$.
 \paragraph*{\textbf{Tests of convergence rates}}
As to initialisation,  let $X^0 \sim N(0.5,1)$, $X_0=0.5$ and $L=5$ (recall that $h_\ell=T\cdot 2^{-\ell}$). In the case when $X^0 \sim N(0.5,1)$ (does no change with time) first Picard $X^{1}$ 
is just an SDE with drift at each time-step being exactly the same. We perform tests across all Picard steps and obtain convergence rates are stable during all Picard iterations as depicted on Figure \ref{fig:mlmc_weak}.
Results are shown in Figure \ref{fig:mlmc_weak}.
\begin{figure}[!htbp]

\begin{subfigure}{.49\textwidth}
  \centering
  \includegraphics[width=.9\linewidth]{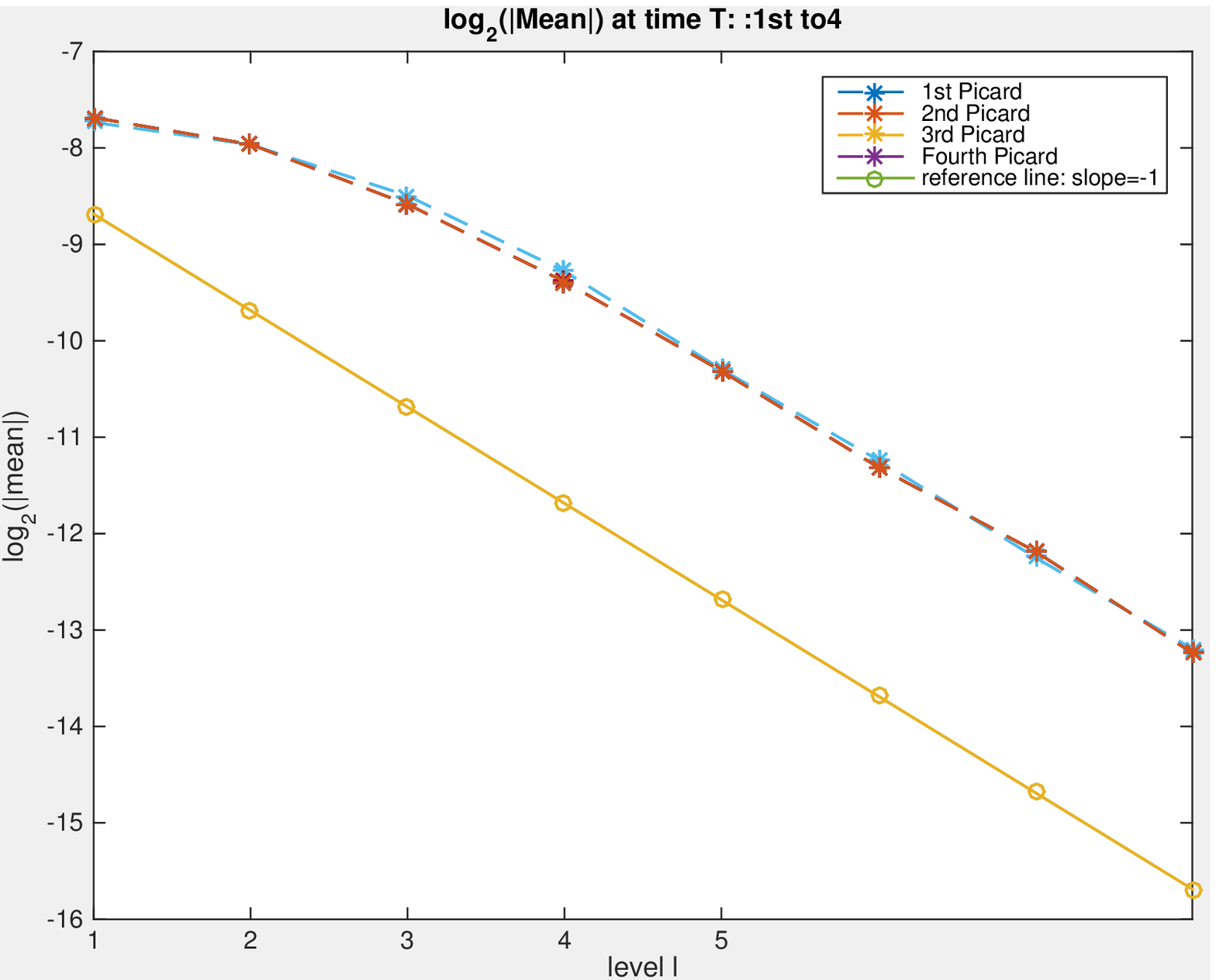}
  \caption{weak rate}
  \label{fig:noninteractingweak}
\end{subfigure}
\begin{subfigure}{.49\textwidth}
  \centering
  \includegraphics[width=.9\linewidth]{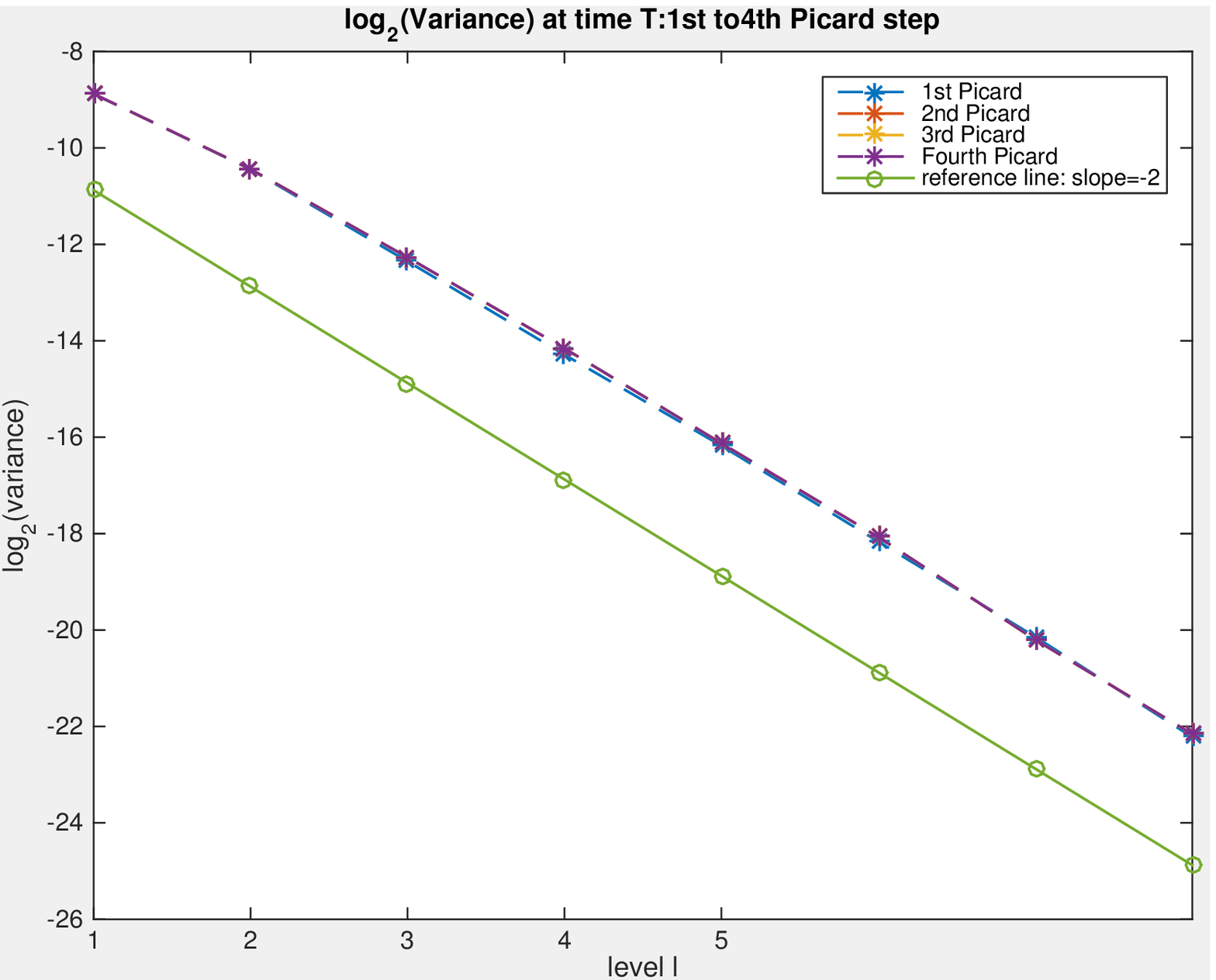}
  \caption{strong rate}
  \label{fig:noninteractingstrong}
  \label{fig:mlmc_weake}
\end{subfigure}%
\caption{In those tests, $N_m$ is $100000$ for all Picard steps and the terminal $T=1$. The sequence $(a_\ell)$ corresponding to weak error of $\varphi_0$ turns into
$
	a_\ell\vcentcolon=\bigg|\frac{1}{N}\sum_{i=1}^{N}\varphi_0(Y_t^{i,m,\ell})-\varphi_0(Y_t^{i,m,\ell-1})\bigg|.
$
Similarly, the sequence $(b_\ell)$ corresponding to strong error of $\varphi_0$ becomes
$
b_\ell\vcentcolon= \frac{1}{N}\sum_{i=1}^{N}\bigg|\varphi_0(Y_t^{i,m,\ell})-\varphi_0(Y_t^{i,m,\ell-1})\bigg|^2.
$ The reference lines in Figure \ref{fig:noninteractingweak} and Figure \ref{fig:noninteractingstrong} indicates $\alpha=1$ and $\beta = 2$ respectively.}
\label{fig:mlmc_weak}
\end{figure}



\textbf{Comparison of all three methods.}
In this section, we compare complexities among standard Particle system applied to the original MVSDE \eqref{eq:oriMVSDE}, Projected Particle system applied to SDE \eqref{eq:Kres} and iterated MLMC (MLMC with Picard) with projected coefficients applied to SDE \eqref{eq:Kres}. The measurement of computational cost  consists of  the number of random gerenations, evaluations of the drift coefficient while ignoring a small amount of constant computational cost induced by linear interpolation operations.
\begin{remark}
We consider the particle system with $5\cdot10^5$ particles and $2^{10}$ timesteps as proxy for the explicit solution of equation \eqref{eq:oriMVSDE} and it corresponds to the benchmark value 1.4951. The MSEs then have been verified, for example, if we require MSEs to be  less than $\epsilon=0.03$, the values lie in the desired region (1.4951 $\pm$ $2\epsilon$). 
\end{remark}

\textbf{Complexity.}
\begin{figure}[!htbp]

\centering
\begin{subfigure}{.49\textwidth}
  \centering
  \includegraphics[width=.9\linewidth]{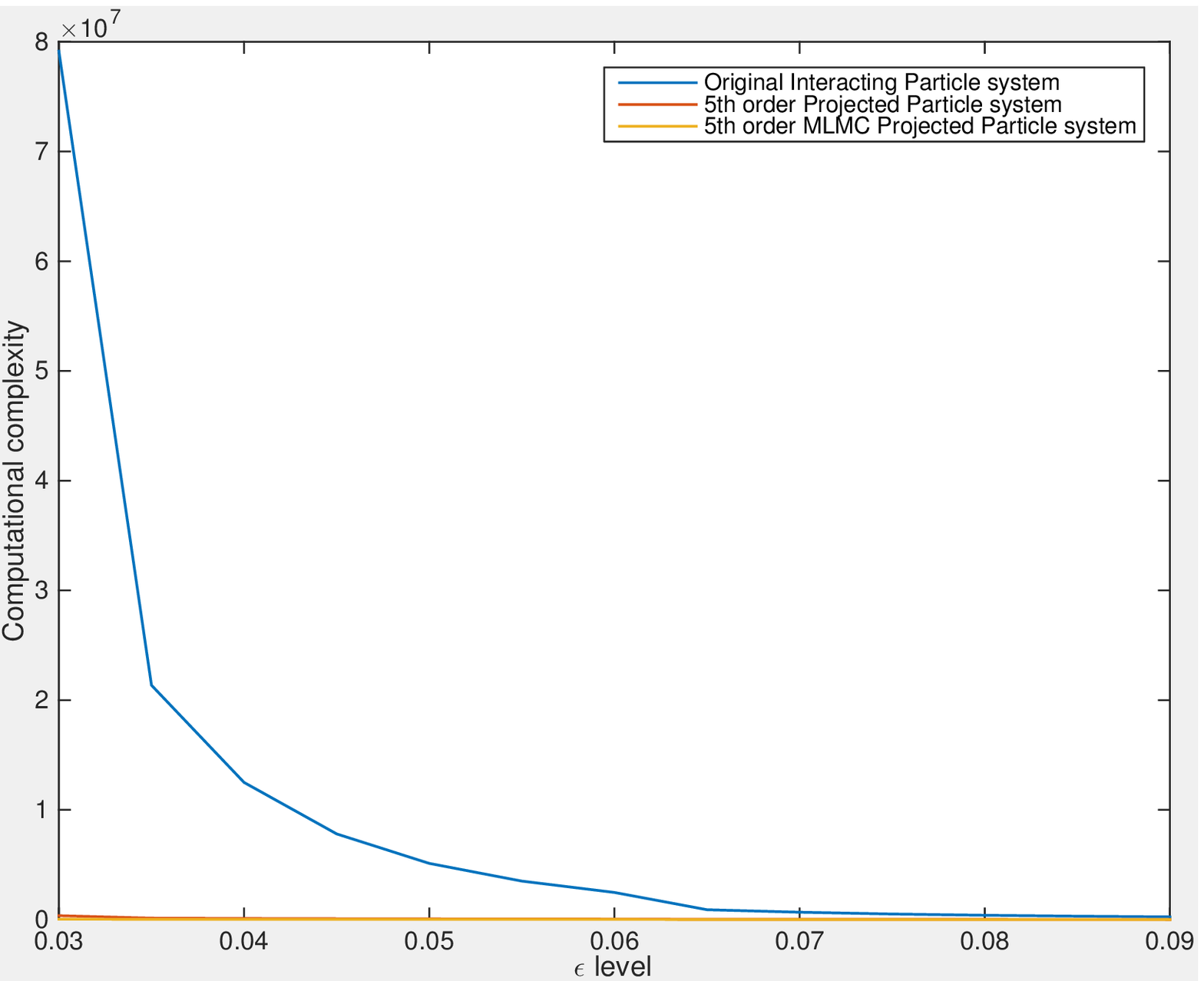}
  \caption{Chaos vs PPM vs Iterated MLMC with Projected coefficients}
  \label{fig:complexity1}
\end{subfigure}
\centering
\begin{subfigure}{.49\textwidth}
  \centering
  \includegraphics[width=.9\linewidth]{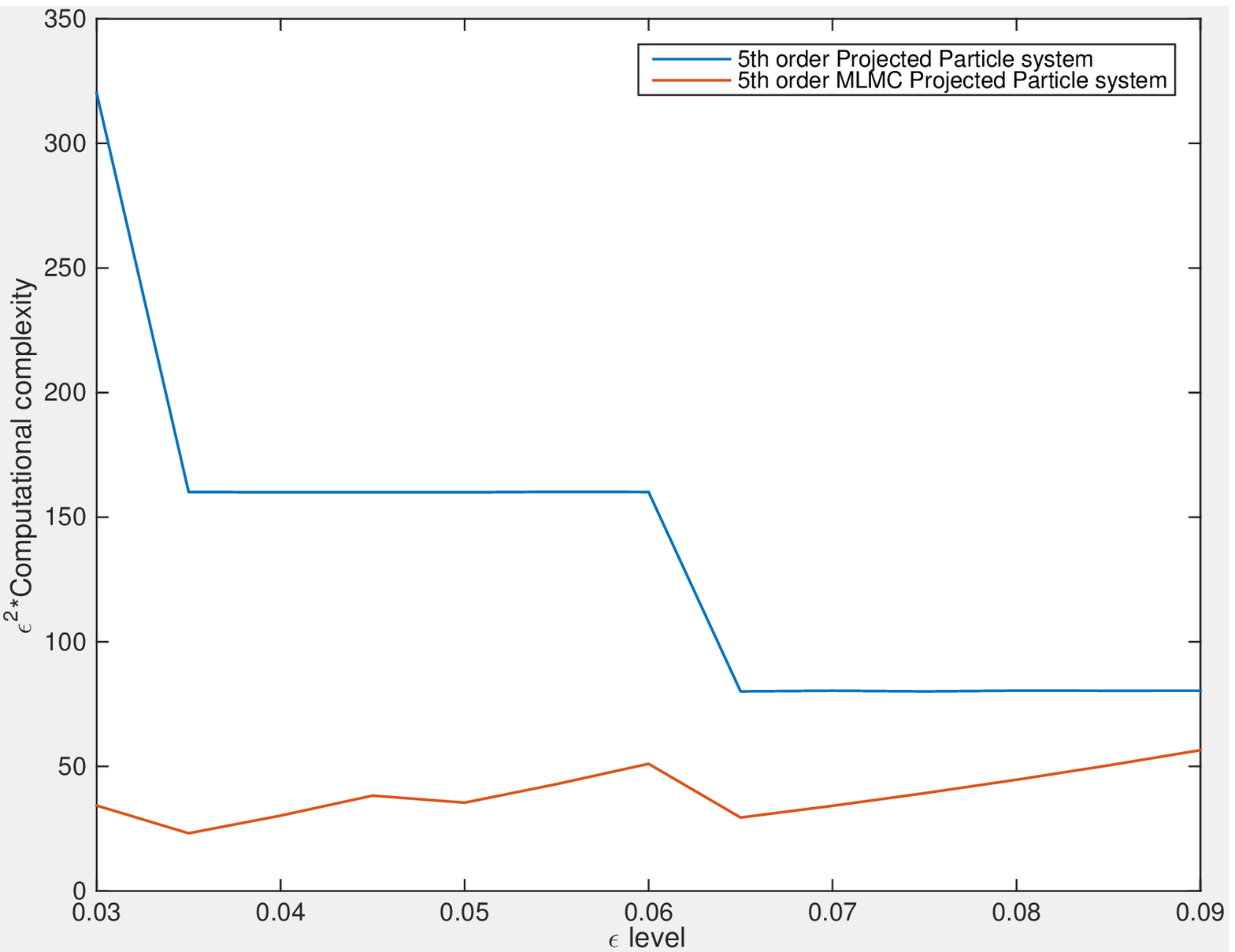}
  \caption{PPM vs Iterated MLMC with Projected coefficients}
  \label{fig:complexity2}
\end{subfigure}
\caption{Chaos stands for standard particle system method applied to the original interacting MVSDE; PPM means Projected Particle method;  From Figure \ref{fig:complexity1} , we see standard particle system has much higher order than $\epsilon^{-2}$ and also observe from Figure \ref{fig:complexity2} that MLMC with projected coefficients is close to $O(\epsilon^{-2})$ while projections close to $O(\epsilon^{-3})$ which coincides with the Theorems \ref{thm:c2nonintK} and \ref{thm:c2nonint} for complexity.}
\label{fig:complexity}
\end{figure}
Figure \ref{fig:complexity} presents the comparison of those complexities. Figure \ref{fig:complexity1} shows standard Particle system applied to the original MVSDE \eqref{eq:oriMVSDE} is the worst one. Figure \ref{fig:complexity2}  indicates that iterated MLMC with projected coefficients can be applied to SDE \eqref{eq:Kres} to further improve the Projected Particle method after reducing interactions from $N$ to $K$.

\textbf{Estimate density of $X_1$.}
Combining Figure \ref{fig:complexity} and Figure \ref{fig:denest} concludes that iterated MLMC with projected coefficients reduce the computational cost without losing the accuracy in terms of density estimation.
\begin{figure}
\centering
\begin{subfigure}{.49\textwidth}
  \centering
  \includegraphics[width=.9\linewidth]{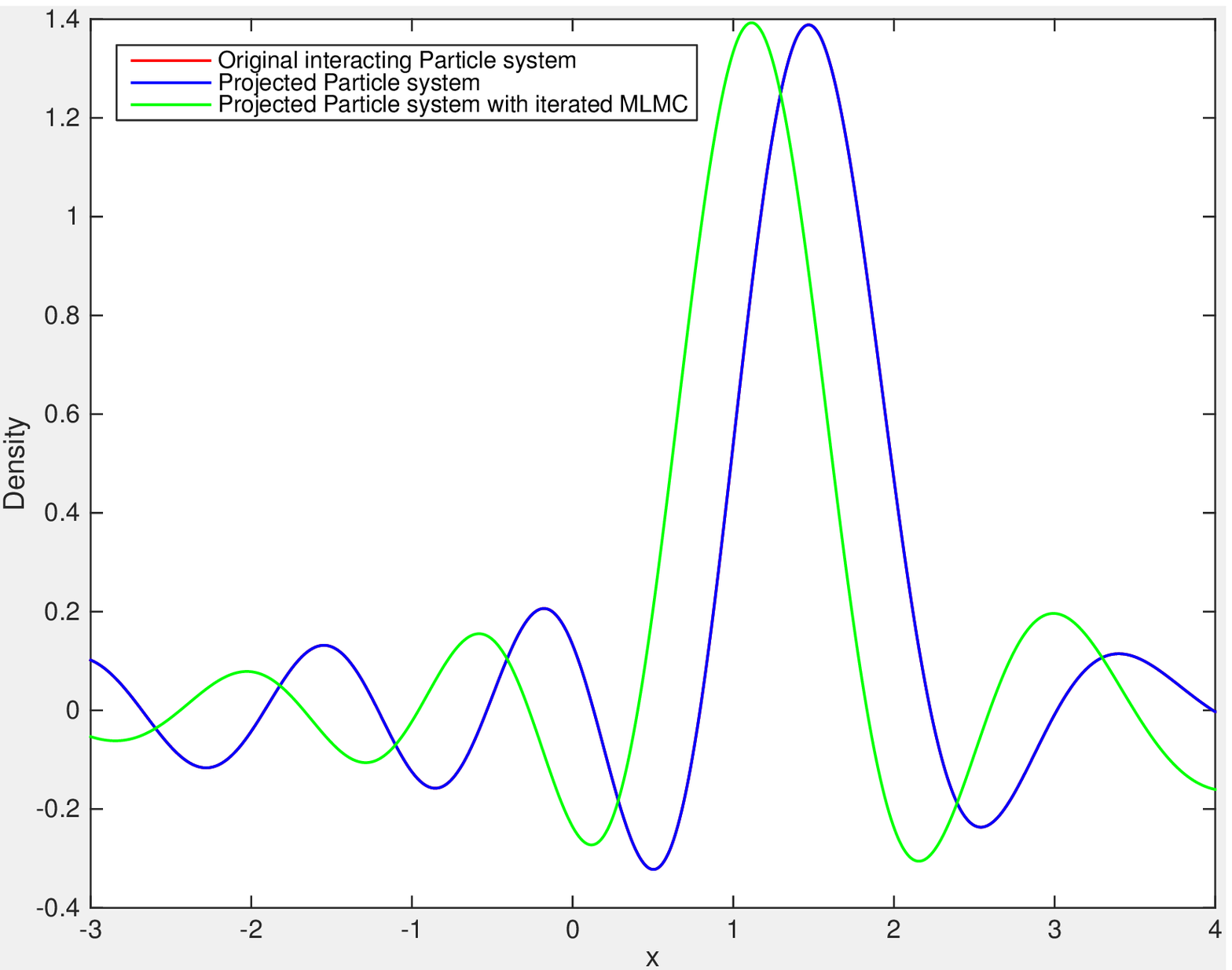}
  \caption{$1$th Picard step}
  \label{fig:noninteractingexample11a}
\end{subfigure}
\begin{subfigure}{.49\textwidth}
  \centering
  \includegraphics[width=.9\linewidth]{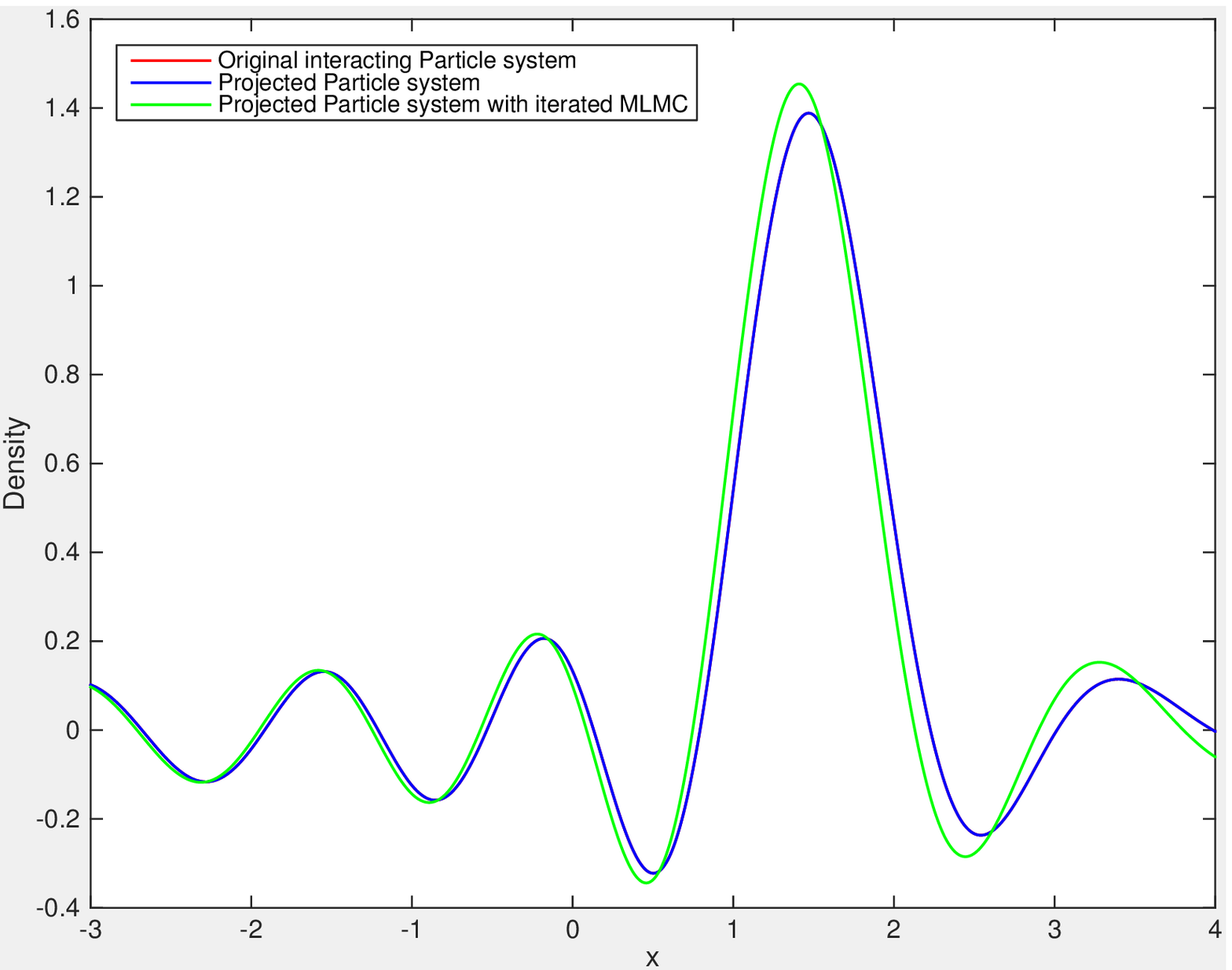}
  \caption{$2$nd Picard step}
  \label{fig:noninteractingexample11b}
\end{subfigure}
\begin{subfigure}{.49\textwidth}
  \centering
  \includegraphics[width=.9\linewidth]{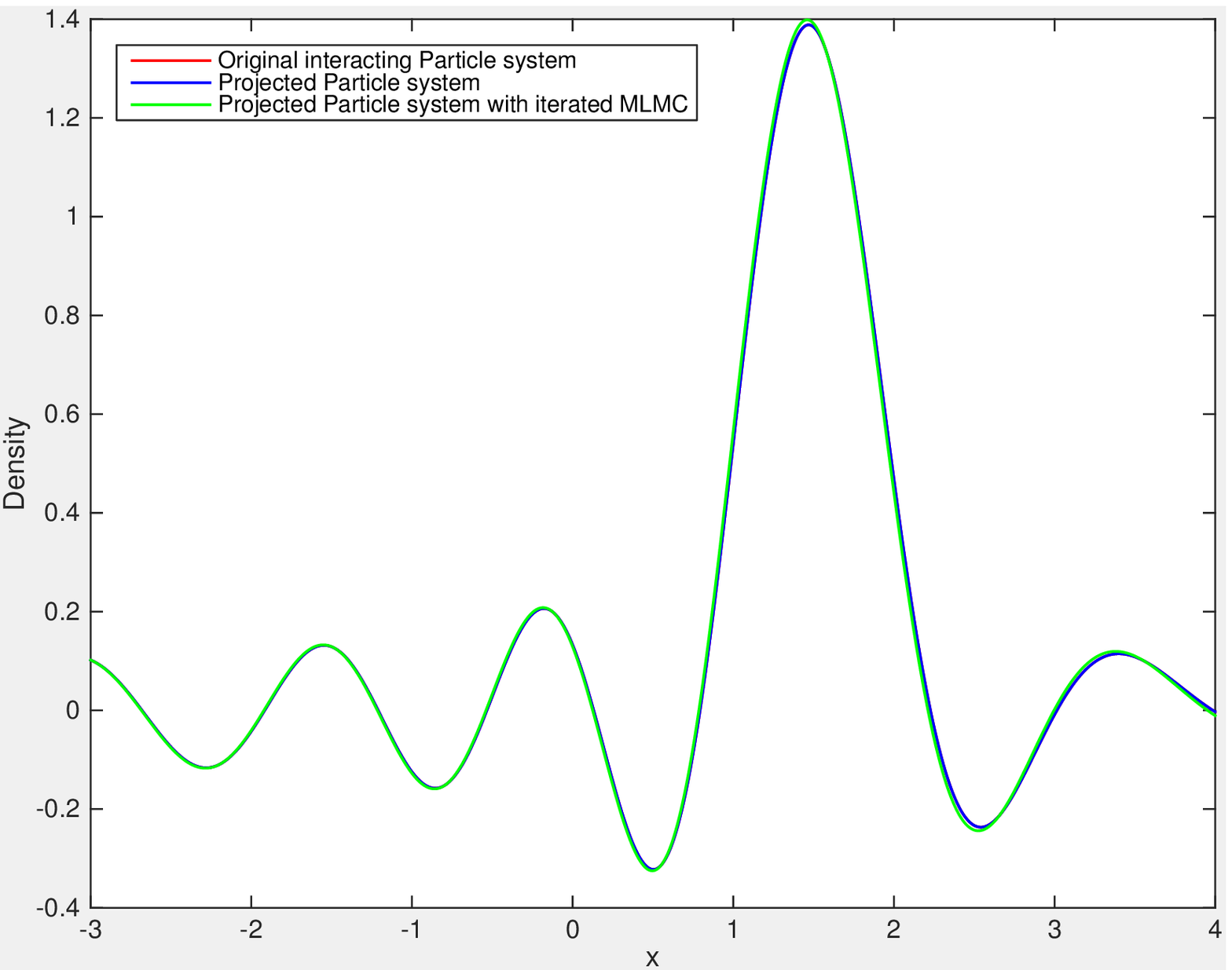}
  \caption{$3$th Picard step}
  \label{fig:noninteractingexample11c}
\end{subfigure}
\centering
\begin{subfigure}{.49\textwidth}
  \centering
  \includegraphics[width=.9\linewidth]{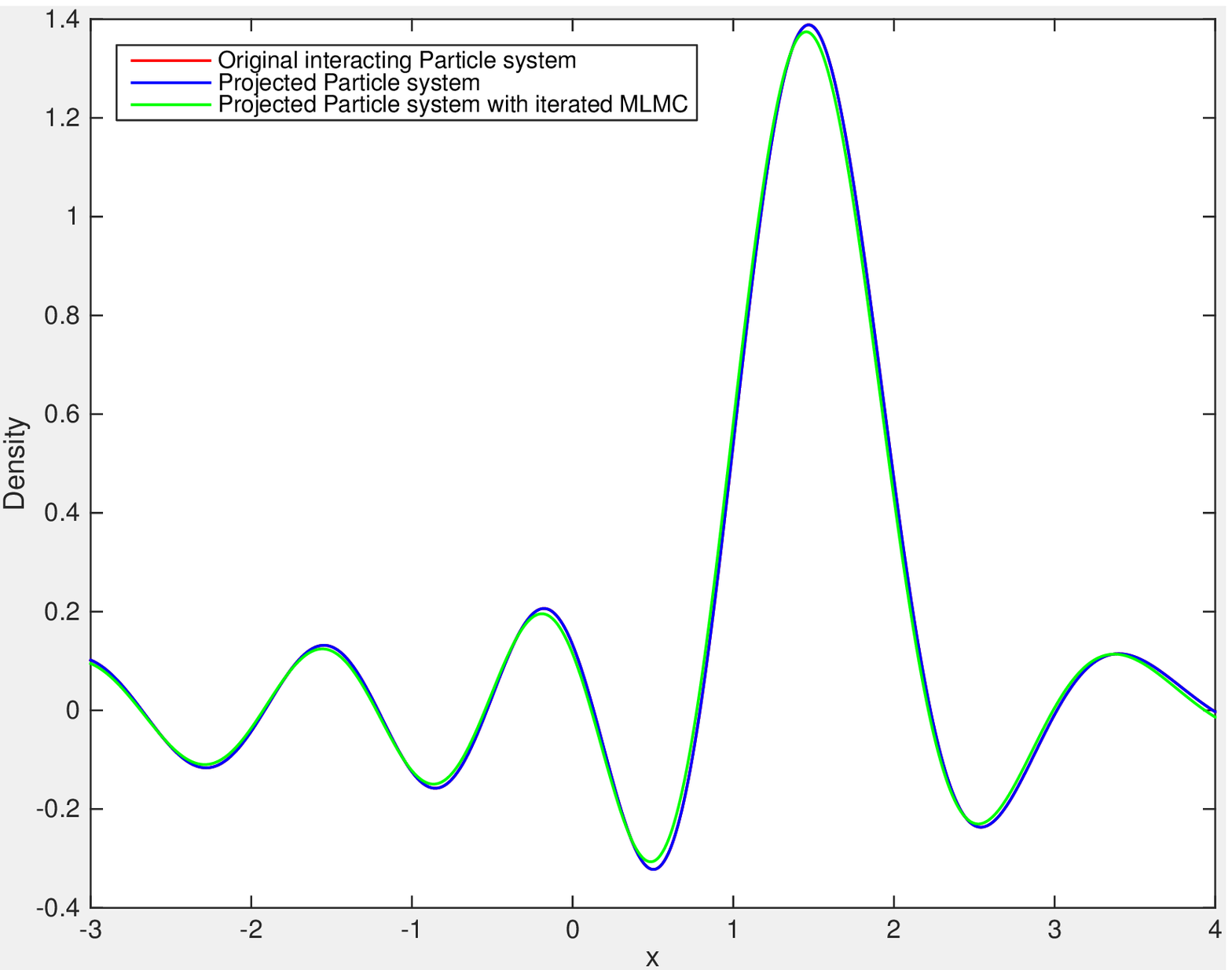}
  \caption{$4$th Picard step}
  \label{fig:noninteractingexample11d}
\end{subfigure}
\caption{Estimated density of $X_1$ using 10 basis functions both for Projected particle system and iterated MLMC with projected coefficients during all Picard steps.}
\label{fig:denest}
\end{figure}

\section{Conclusions}

In this article, we presented a novel iterative MLMC estimator for the challenging problem of simulating McKean-Vlasov SDEs. As in the classical stochastic particle, approximation particles are not independent and the bias and statistical error are in a nonlinear relationship, classical methods fails. Nonetheless in the case when coefficients of MVSDEs are smooth enough our approach recovers computational complexity \(\varepsilon^{-2} |\log(\epsilon)|^{3}\). This is a very promising strand of research and in the future work and we have addressed the complete rigorous analysis of this fact under certain assumptions. Furthermore, our approach easily extends to other than projections approximation methods and also allows for MLMC treatment of approximating densities. Finally, we believe that the idea of approximating a complex/non-linear/highly-dimensional models with manageable/simplified models and then applying iterative MLMC approach can be fruitful in other application areas and we anticipate interest in this approach from various research communities.

\begin{acknowledgement}
This work was supported by The Alan Turing Institute under the EPSRC grant EP/N510129/1
\end{acknowledgement}

\bibliographystyle{spmpsci}
\bibliography{Particles.bib}

\end{document}